\documentclass[15pt]{article}

\textwidth15truecm \textheight21truecm \oddsidemargin=0cm

\usepackage{amsmath,amsfonts,amssymb,amsthm,amscd,mathrsfs}
\usepackage{enumitem}

\usepackage{yhmath}
\usepackage{graphicx} 

\newcounter{stepctr}
{\end{list}}

\newtheorem{thm}{Theorem}[section]
\newtheorem{prop}[thm]{Proposition}
\newtheorem{cor}[thm]{Corollary}

\theoremstyle{definition}
\newtheorem{dfn}[thm]{Definition}

\newtheorem{rema}[thm]{Remark}
\newtheorem{lem}[thm]{Lemma}
\newtheorem{prob*}{Open problem}
\newcommand{\demo}{\begin{proof}}

\newcommand{\A}{\mathcal{A}}
\newcommand{\NN}{\mathcal{N}}
\newcommand{\R}{\ensuremath{\mathcal{R}}}

\newcommand{\N}{\mathbb{N}}

\usepackage{array,multirow,makecell}
\setcellgapes{1pt}
\makegapedcells
\newcolumntype{R}[1]{>{\raggedleft\arraybackslash }b{#1}}
\newcolumntype{L}[1]{>{\raggedright\arraybackslash }b{#1}}
\newcolumntype{C}[1]{>{\centering\arraybackslash }b{#1}}

\def\ll^2{{\mathcal L}(\ell^2(\N))}

\def\f^0x{{\mathcal F^0}(X) }

\pagestyle{myheadings} \markboth{\rm    \hss} {\hss\rm   Perturbations  not necessarily commutative}
\title {\bf  Perturbations   not necessarily commutative}

\author{Zakariae Aznay, Abdelmalek Ouahab, Hassan Zariouh }

\date{}
\begin{document}
\maketitle \thispagestyle{empty}
\begin{abstract}\noindent\baselineskip=10pt
This paper treatises the preservation of some spectra under perturbations not necessarily commutative and generalizes several    results which have been proved in the case of commuting operators.
\end{abstract}

 \baselineskip=15pt
 \footnotetext{\small \noindent  2020 AMS subject
classification: Primary  13AXX;  47A10; 47A11; 47A53; 47A55 \\
\noindent Keywords:  Pertubations, spectra} \baselineskip=15pt
\section{Introduction and preliminaries}
According to \cite{aznay-ouahab-zariouh7}, for an element $a$    of a ring  $\A,$ denote by 
    $$\mbox{comm}_{l}(a)=\{b \in \A :  ab \in \mbox{comm}(a) \text{ and } ba  \in \mbox{comm}(b)\},$$
  $$ \mbox{ by }\mbox{comm}_{r}(a) =\{b \in \A :    ab \in \mbox{comm}(b) \text{ and  } ba \in \mbox{comm}(a)\},$$
  $$\mbox{  and  by } \mbox{comm}_{w}(a)=\mbox{comm}_{l}(a)\cap\mbox{comm}_{r}(a),$$
where $\mbox{comm}(a)$ is    the set of all elements that commute   with $a.$ Denote also   by  $\mbox{comm}^{2}(a)=\mbox{comm}(\mbox{comm}(a))$ and by $\mbox{Nil}(\A)$ the nilradical  of $\A.$ If  $\A$ is a unital  complex Banach algebra,   we means by $\sigma(a),$ $\mbox{acc}\,\sigma(a),$    $r(a)$ and $\mbox{exp}(a),$ the  spectrum of $a,$ the accumulation points of $\sigma(a),$  the spectral radius of $a$   and the exponential of $a,$  respectively. We say that  $a$ is quasi-nilpotent  if $r(a)=0.$   If $T\in L(X)$ the   algebra of all bounded linear operators acting on an infinite dimensional complex Banach space  $X,$ then     $T^{*},$ $\alpha(T)$ and  $\beta(T)$ means   respectively,     the dual of  $T,$   the dimension of the kernel $\mathcal{N}(T)$   and the codimension of the range $\R(T),$  and  denote by  $\R(T^{\infty})=\underset{n\geq0}{\bigcap}\R(T^{n})$ and $\mathcal{N}(T^{\infty})=\underset{n\geq0}{\bigcup}\mathcal{N}(T^{n}).$ Moreover,  the ascent and the descent of $T$ are defined  by $p(T)=\inf\{n\in \mathbb{N}: \mathcal{N}(T^n) = \mathcal{N}(T^{n+1})\}$    and  $q(T)= \inf\{n\in \mathbb{N}: \R(T^n) = \R(T^{n+1})\}$  (with $\mbox{inf}\emptyset=\infty$).   We say that a  subspace $M$ of $X$ is $T$-invariant if $T(M)\subset M$ and   the restriction of $T$ on $M$ is denoted by $T_{M},$ and we say that   $(M,N) \in \mbox{Red}(T)$  if $M,$   $N$ are  closed $T$-invariant subspaces and  $X=M\oplus N.$     For  $n\in\N,$   denote by   $T_{[n]}=T_{\mathcal{R}(T^{n})}$    and by     $m_T=\mbox{inf}\{n \in \N  :  \mbox{min}\{\alpha(T_{[n]}),\beta(T_{[n]})\}<\infty\}$   the \textit{essential  degree} of   $T.$ An operator   $T$ is called upper semi-B-Fredholm  (resp., lower semi-B-Fredholm)  if the \textit{essential ascent} $p_{e}(T):=\mbox{inf}\{n \in \N  :  \alpha(T_{[n]})<\infty\}<\infty$ and $\R(T^{p_{e}(T)+1})$ is closed  (resp.,  the \textit{essential descent}  $q_{e}(T):=\mbox{inf}\{n \in \N  :  \beta(T_{[n]})<\infty\}<\infty$ and $\R(T^{q_{e}(T)})$ is closed). If  $T$ is   an upper or a lower  (resp.,  upper and   lower)  semi-B-Fredholm, then $T$ it is called   \emph{semi-B-Fredholm} (resp., \emph{B-Fredholm})  and      its     index   is  defined   by $\mbox{ind}(T) = \alpha(T_{[m_{T}]})-\beta(T_{[m_{T}]}).$ $T$ is said to be an upper semi-B-Weyl (resp.,  lower semi-B-Weyl, B-Weyl,  left Drazin invertible, right Drazin invertible, Drazin invertible) if  $T$ is an upper semi-B-Fredholm with $\mbox{ind}(T)\leq 0$  (resp., $T$ is a lower  semi-B-Fredholm with $\mbox{ind}(T)\geq 0,$ $T$ is a B-Fredholm with $\mbox{ind}(T)=0,$   $T$ is an upper semi-B-Fredholm and $p(T_{[m_{T}]})<\infty,$ $T$ is a lower  semi-B-Fredholm and $q(T_{[m_{T}]})<\infty,$  $p(T_{[m_{T}]})=q(T_{[m_{T}]})<\infty$).   If $T$ is  upper semi-B-Fredholm (resp.,  lower semi-B-Fredholm,    semi-B-Fredholm, B-Fredholm, upper semi-B-Weyl,  lower semi-B-Weyl, B-Weyl,  left Drazin invertible, right Drazin invertible, Drazin invertible) with essential degree $m_{T}=0,$ then $T$ is said to be  an upper semi-Fredholm (resp.,  lower semi-Fredholm,   semi-Fredholm, Fredholm, upper semi-Weyl, lower semi-Weyl, Weyl, upper semi-Browder, lower semi-Browder, Browder).  $T$ is said to be  bounded below if $T$ is upper semi-Fredholm with $\alpha(T)=0,$ and is said  to be   Riesz   if $T-\lambda I$ is  Fredholm    for all non-zero complex  $\lambda$ or  equivalently  $\pi(T):=T+K(X)$ is quasi-nilpotent in the Calkin algebra $L(X)/K(X)$ ($K(X)$ is the  ideal  of  compact operators). Following \cite{rwassariesz}, $T$ is said  to be  generalized Drazin-Riesz invertible if there exists $(M,N)\in \mbox{Red}(T)$ such that $T_{M}$ is invertible and $T_{N}$ is Riesz.  $T$ is said  to be   semi-regular (resp., essentially semi-regular) if $\R(T)$ is closed and $\NN(T)\subseteq \R(T^{\infty})$ (resp., $\R(T)$ is closed and there
exists a finite-dimensional subspace $F$  such that  $\NN(T)\subseteq \R(T^{\infty})+F$).

 \par In  this paper, we study the stability of the   spectra summarized in the next list     under the     algebraic conditions considered in \cite{aznay-ouahab-zariouh7} that are  weaker than the commutativity.

\medskip
 \begin{tabular}{l|l}
   $\sigma_{a}(T)$: approximative spectrum of $T$ &  $\sigma_{bw}(T)$: B-Weyl spectrum of $T$\\
   $\sigma_{e}(T)$:  essential  spectrum of $T$ &  $\sigma_{ubw}(T)$: upper semi-B-Weyl spectrum of $T$\\
   $\sigma_{uf}(T)$:  upper semi-Fredholm spectrum of $T$ &  $\sigma_{lbw}(T)$: lower semi-B-Weyl spectrum of $T$\\
   $\sigma_{lf}(T)$:  lower semi-Fredholm spectrum of $T$ &  $\sigma_{ld}(T)$: left Drazin spectrum of $T$\\
   $\sigma_{w}(T)$: Weyl spectrum of $T$ & $\sigma_{rd}(T)$:  right Drazin spectrum of $T$\\
   $\sigma_{uw}(T)$: upper semi-Weyl spectrum of $T$ & $\sigma_{d}(T)$:  Drazin spectrum of $T$\\
   $\sigma_{lw}(T)$: lower semi-Weyl spectrum of $T$ & $\sigma_{se}(T)$:  semi-regular spectrum of $T$\\
   $\sigma_{b}(T)$:  Browder spectrum of $T$ &  $\sigma_{gd}(T)$:  generalized Drazin spectrum of  $T$\\
   $\sigma_{ub}(T)$:  upper semi-Browder spectrum of $T$ &  $\sigma_{g_{z}d}(T)$:   $g_{z}$-invertible   spectrum of $T$ \cite{aznay-ouahab-zariouh6}\\
   $\sigma_{lb}(T)$:  lower semi-Browder spectrum of $T$ &   $\sigma_{bf}(T)$: B-Fredholm spectrum of $T$\\
 $\sigma_{ubf}(T)$: upper semi-B-Fredholm spectrum of $T$ &   $\sigma_{lbf}(T)$: lower  semi-B-Fredholm spectrum of $T$\\
\end{tabular}\\

\noindent  As an extension of  \cite[Proposition 2.6]{berkani},  we prove that if    $a,b\in \A$ are Drazin invertible   such that $a \in \mbox{comm}_{w}(b),$ then $a^{D}\in\mbox{comm}(b^{D})$ and $ab$ is Drazin invertible with  $(ab)^{D}=a^{D}b^{D}.$ Moreover, we prove that if  $T$ is generalized Drazin-Riesz invertible and $R$ is a Riesz operator such that   $R\in\mbox{comm}_{l}(T)$ and $T \in \mbox{comm}(TR)$ (or $R\in\mbox{comm}_{r}(T)$ and $T \in \mbox{comm}(RT)$), then  $T+R$ is generalized Drazin-Riesz invertible and   $\sigma_{*}(T)=\sigma_{*}(T+R),$ where  $\sigma_{*}\in\{\sigma_{e},\sigma_{uf},\sigma_{lf},\sigma_{w},\sigma_{uw},\sigma_{lw},\sigma_{b},\sigma_{ub},\sigma_{lb}\}.$ If in addition $T$ is generalized Drazin invertible and $R$ is  quasi-nilpotent,  then   $T+R$ is generalized Drazin invertible and  $\sigma(T)=\sigma(T+R).$
  Among other results,  we give  a new characterization of power finite rank operators,   by   proving that  if $\sigma_{*}\in\{\sigma_{bf},\sigma_{ubf},\sigma_{lbf},\sigma_{d},\sigma_{ld},\sigma_{rd}\},$ then  $F$ is  a power finite rank operator  if and only if   $\sigma_{*}(T)=\sigma_{*}(T+F)$ for every    generalized Drazin-Riesz invertible operator $T\in\mbox{comm}_{w}(F).$

\section{Pseudo invertible elements of  a ring}
\begin{dfn} Let $\A$ be a ring and let $a\in \A.$ We say that $a$ is pseudo invertible if there exists  $c \in \mbox{comm}^{2}(a)$  such that    $c=c^{2}a.$ In this case we say that $c$ is   a  pseudo inverse of $a.$
\end{dfn}
\noindent According to \cite{koliha}, an element  $a$ of a unital   complex  Banach algebra   $\A,$ is said to be    generalized Drazin invertible if there exists  $b \in  \mbox{comm}\,(a)$ such that       $b^{2}a=b$ and $a-a^{2}b$ is quasi-nilpotent. If so then  $b \in  \mbox{comm}^{2}\,(a)$  and is  denoted by $a^{D}$ and  called the generalized Drazin inverse of $a.$ It is proved also that   $a$ is generalized Drazin invertible if and only if $0\notin\mbox{acc}\,\sigma(a).$ So every generalized Drazin invertible element $a$ is pseudo invertible and its Drazin  inverse $a^{D}$  is  a pseudo inverse of $a.$
\begin{prop}\label{proppseudoring}  Let $\A$ be a ring and let  $a,b \in \A$ such that $a$ is pseudo invertible. Then for every pseudo inverse           $c$ of $a,$  we have\\
(i) If  $ba \in \mbox{comm}(a),$ then $bc \in \mbox{comm}(c),$ and  if in addition      $a\in\mbox{comm}_{r}(b),$  then $c\in\mbox{comm}_{r}(b).$\\
(ii) If  $ab \in \mbox{comm}(a),$ then $cb \in \mbox{comm}(c),$ and   if in addition   $a\in\mbox{comm}_{l}(b),$  then $c\in\mbox{comm}_{l}(b).$\\
(iii) If   $a\in\mbox{comm}_{w}(b),$  then $c\in\mbox{comm}_{w}(b).$
\end{prop}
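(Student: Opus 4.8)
The plan is to reduce everything to elementary manipulations in $\A$, the only structural input being the membership $c\in\mbox{comm}^{2}(a)$. First I would record the consequences of this membership. Since $a\in\mbox{comm}(a)$, the double commutant property gives $ac=ca$, and combining this with $c=c^{2}a$ yields the two forms $c=c^{2}a=ac^{2}$ together with $c^{2}a=ac^{2}$; in particular $cb=(c^{2}a)b=c^{2}ab$. The second, crucial, observation is that $c$ commutes with \emph{every} element of $\mbox{comm}(a)$, so the hypothesis $ba\in\mbox{comm}(a)$ immediately gives $cba=bac$, while $ab\in\mbox{comm}(a)$ gives $cab=abc$. These few identities are the entire toolbox.

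For the first assertion of (i) I would prove $bc\in\mbox{comm}(c)$, i.e. $bc^{2}=cbc$, by substituting $c=ac^{2}$ into one factor on each side: $bc^{2}=b(ac^{2})c=bac^{3}$, and $cbc=cb(ac^{2})=(cba)c^{2}=(bac)c^{2}=bac^{3}$, the middle equality using $cba=bac$; both sides equal $bac^{3}$. The first assertion of (ii) is the mirror computation: from $cab=abc$ one gets $cbc=(c^{2}a)bc=c^{2}(abc)=c^{2}(cab)=c^{3}ab=c^{2}b$, so $cb\in\mbox{comm}(c)$.

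The part I expect to be the real obstacle is the ``if in addition'' clause, since it asks me to transfer a commutation relation between $b$ and $a$ into one between $b$ and $c$, and $c$ need not be a polynomial in $a$. The key auxiliary identity is obtained by multiplying $cba=bac$ on the left by $c$ and reusing it once more on the right, giving $c^{2}ba=bac^{2}$. Granting the extra hypothesis $ab\in\mbox{comm}(b)$, that is $ab^{2}=bab$, I would then chain
\[
cb^{2}=c^{2}ab^{2}=c^{2}bab=bac^{2}b=b(ac^{2}b)=b(c^{2}ab)=bcb,
\]
where the successive steps use $cb=c^{2}ab$, then $ab^{2}=bab$, then $c^{2}ba=bac^{2}$, then $ac^{2}=c^{2}a$, and finally $c^{2}ab=cb$; this gives $cb\in\mbox{comm}(b)$, hence $c\in\mbox{comm}_{r}(b)$. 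The second assertion of (ii) follows by the same scheme with the mirror identity $c^{2}ab=abc^{2}$ and the hypothesis $ba\in\mbox{comm}(b)$ (i.e. $bab=b^{2}a$), yielding $bcb=b^{2}c$ and so $c\in\mbox{comm}_{l}(b)$. Finally (iii) needs no new argument: $a\in\mbox{comm}_{w}(b)$ supplies all four hypotheses $ba,ab\in\mbox{comm}(a)$ and $ab,ba\in\mbox{comm}(b)$, so (i) and (ii) together give $c\in\mbox{comm}_{r}(b)\cap\mbox{comm}_{l}(b)=\mbox{comm}_{w}(b)$.
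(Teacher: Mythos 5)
Your proof is correct: every identity in your chains checks out (in particular $cb^{2}=c^{2}ab^{2}=c^{2}bab=bac^{2}b=b(c^{2}ab)=bcb$ and its mirror), and you correctly isolate the extra hypotheses $ab\in\mbox{comm}(b)$ resp.\ $ba\in\mbox{comm}(b)$ hidden in $a\in\mbox{comm}_{r}(b)$ resp.\ $a\in\mbox{comm}_{l}(b)$. This is essentially the paper's own argument — the same toolbox ($c=c^{2}a=ac^{2}$ plus the fact that $c$, being in $\mbox{comm}^{2}(a)$, commutes with $ba$ and $ab$) and the same direct computations, differing only in the order of substitutions.
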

\begin{proof} By hypotheses we get    $c^{2}=c^{2}ca=acc^{2}.$   Thus  \\
(i) If  $ba \in \mbox{comm}(a),$ then   $ba \in \mbox{comm}(c)$ and  so $bc^{2}=((ba)c)c^{2}=(c(ba))c^{2}=cbc.$ If in addition   $a\in\mbox{comm}_{r}(b),$  then $bcb=((ba)c^{2})b=c^{2}(bab)=cb^{2}.$ So  $c\in\mbox{comm}_{r}(b).$\\
(ii) If   $ab \in \mbox{comm}(a),$ then   $ab \in \mbox{comm}(c)$ and thus  $c^{2}b=c^{2}(c(ab))=c^{2}((ab)c)=cbc.$  If in addition   $a\in\mbox{comm}_{l}(b),$  then $bcb=b(c^{2}(ab))=(bab)c^{2}=b^{2}ac^{2}=b^{2}c.$ So $c\in\mbox{comm}_{l}(b).$\\
(iii) Follows directly from  the previous points.
\end{proof}
 From Proposition \ref{proppseudoring}, we immedialtely deduce  the next corollary.
\begin{cor}  Let $\A$ be a unital  complex Banach algebra     and let  $a,b \in \A$  such that $a$ is generalized Drazin invertible. Then  the  following assertions hold:
\begin{enumerate}[nolistsep]
\item[(i)] If  $ba \in \mbox{comm}(a),$ then $ba^{D} \in \mbox{comm}(a^{D}),$  and if in addition      $a\in\mbox{comm}_{r}(b),$  then $a^{D}\in\mbox{comm}_{r}(b).$
\item[(ii)] If  $ab \in \mbox{comm}(a),$ then $a^{D}b \in \mbox{comm}(a^{D}),$ and  if in addition   $a\in\mbox{comm}_{l}(b),$  then $a^{D}\in\mbox{comm}_{l}(b).$
\item[(iii)] If   $a\in\mbox{comm}_{w}(b),$  then $a^{D}\in\mbox{comm}_{w}(b).$
\end{enumerate}
\end{cor}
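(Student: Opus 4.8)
The plan is to obtain the corollary as a direct specialization of Proposition \ref{proppseudoring}, taking the pseudo inverse $c$ to be the generalized Drazin inverse $a^{D}$. The only point that needs justification is that $a^{D}$ genuinely qualifies as a pseudo inverse of $a$ in the sense of the Definition, and this is exactly what the remark preceding the corollary records: by Koliha's characterization, $a^{D}\in\mbox{comm}^{2}(a)$ and $a^{D}$ satisfies $(a^{D})^{2}a=a^{D}$, which is precisely the defining relation $c=c^{2}a$ for a pseudo inverse. Hence $c:=a^{D}$ is admissible in Proposition \ref{proppseudoring}.

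With this identification in place, each of the three assertions of the corollary is the corresponding assertion of Proposition \ref{proppseudoring} read with $c=a^{D}$. For (i), the hypothesis $ba\in\mbox{comm}(a)$ gives $ba^{D}\in\mbox{comm}(a^{D})$, and adjoining $a\in\mbox{comm}_{r}(b)$ upgrades this to $a^{D}\in\mbox{comm}_{r}(b)$. Item (ii) is the mirror statement under $ab\in\mbox{comm}(a)$ and $a\in\mbox{comm}_{l}(b)$, yielding $a^{D}b\in\mbox{comm}(a^{D})$ and then $a^{D}\in\mbox{comm}_{l}(b)$. Item (iii) follows by intersecting the conclusions of (i) and (ii), exactly as in part (iii) of the proposition, since $\mbox{comm}_{w}(b)=\mbox{comm}_{l}(b)\cap\mbox{comm}_{r}(b)$.

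There is essentially no obstacle here: all the computational content — the manipulations $bc^{2}=cbc$, $bcb=cb^{2}$, and their duals — has already been discharged in the proof of Proposition \ref{proppseudoring} at the level of a general ring. The passage from ring to Banach algebra costs nothing, because the only extra structure invoked is the existence and the $\mbox{comm}^{2}(a)$-membership of $a^{D}$, which are supplied by Koliha's theorem. Thus the corollary is immediate, and the sole verification worth stating explicitly is the pseudo-inverse relation $(a^{D})^{2}a=a^{D}$ together with $a^{D}\in\mbox{comm}^{2}(a)$.
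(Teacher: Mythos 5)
Your proof is correct and follows exactly the route the paper itself takes: the paper states that, by Koliha's theorem, $a^{D}\in\mbox{comm}^{2}(a)$ with $(a^{D})^{2}a=a^{D}$, so $a^{D}$ is a pseudo inverse of $a$, and then deduces the corollary immediately from Proposition \ref{proppseudoring}. Your write-up simply makes this specialization explicit, so there is nothing to add or correct.
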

\begin{lem}\label{propaboutinverses}
Let $\A$ be a ring and let  $a,b \in \A$  pseudo invertible. If  $c$  and $d$ are respectively   pseudo inverses of $a$ and $b,$ we  then have  \\
(i) If       $a\in\mbox{comm}_{r}(b),$  then $c\in\mbox{comm}_{r}(b)$ and  $cb,ad,cd\in \mbox{comm}(b).$\\
(ii) If    $a\in\mbox{comm}_{l}(b),$  then $c\in\mbox{comm}_{l}(b)$ and $cb,ad,cd\in \mbox{comm}(a).$\\
(iii) If   $a\in\mbox{comm}_{w}(b),$  then $c\in\mbox{comm}_{w}(b)$  and  $cb,ad,cd\in \mbox{comm}(a)\cap\mbox{comm}(b).$
\end{lem}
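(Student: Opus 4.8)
The plan is to treat parts (i)--(iii) by isolating the genuinely new content. The memberships $c\in\mbox{comm}_r(b)$, $c\in\mbox{comm}_l(b)$, $c\in\mbox{comm}_w(b)$ are furnished verbatim by Proposition \ref{proppseudoring}, so nothing new is needed there; moreover part (iii) is just the conjunction of (i) and (ii) together with Proposition \ref{proppseudoring}(iii). Hence the whole task reduces to showing, for the three mixed products $cb$, $ad$, $cd$, that they commute with $b$ (in (i)) and with $a$ (in (ii)). Throughout I would first record the elementary identities coming from the two pseudo inverses: since $c\in\mbox{comm}^2(a)$ and $d\in\mbox{comm}^2(b)$ we have $ca=ac$ and $db=bd$, and combining these with $c=c^2a$ and $d=d^2b$ gives the symmetric ``absorption'' forms $c=c^2a=ac^2$ and $d=d^2b=bd^2$. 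These identities are the engine of both transfer arguments below.

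For part (i) the target commutant is $b$, and the mechanism is right absorption through $d=bd^2$. I would first note that $cb\in\mbox{comm}(b)$ is already one of the two clauses of $c\in\mbox{comm}_r(b)$. The key observation is a small transfer principle: if an element $x$ satisfies $xb\in\mbox{comm}(b)$, then $xd\in\mbox{comm}(b)$. Indeed $xb\in\mbox{comm}(b)$ says $(xb-bx)b=0$, and substituting $d=bd^2$ and reassociating turns this right-annihilation of $b$ into right-annihilation of $d$, namely $(xb-bx)d=(xb-bx)bd^2=0$; since $d$ commutes with $b$ this is exactly $xdb=bxd$. Applying this with $x=a$ (legitimate because $ab\in\mbox{comm}(b)$, one clause of $a\in\mbox{comm}_r(b)$) gives $ad\in\mbox{comm}(b)$, and with $x=c$ (legitimate because $cb\in\mbox{comm}(b)$) gives $cd\in\mbox{comm}(b)$.

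For part (ii) the target commutant is $a$, and the mechanism is left absorption through $c=c^2a$, in the form of the principle ``$ax=0\Rightarrow cx=0$'', valid because $cx=c^2ax=c^2(ax)$. Using $ac=ca$ one reduces $cb\in\mbox{comm}(a)$ to $c(ab-ba)=0$ and $cd\in\mbox{comm}(a)$ to $c(ad-da)=0$, so it suffices to kill each commutator first by $a$ on the left and then transfer to $c$. For the first, $ab\in\mbox{comm}(a)$ gives $a(ab-ba)=0$ at once, whence $c(ab-ba)=0$. The main obstacle is $ad\in\mbox{comm}(a)$, since here there is no one-line transfer: I expect to prove $ada=a^2d$ by the explicit chain $ada=a\,d^2b\,a=a\,d^2(ba)=a(ba)d^2=(aba)d^2=(a^2b)d^2=a^2(bd^2)=a^2d$, where the step $d^2(ba)=(ba)d^2$ uses that $ba\in\mbox{comm}(b)$ (the other clause of $a\in\mbox{comm}_l(b)$) forces $ba$ to commute with $d$, and the step $aba=a^2b$ is the ascent identity coming from $ab\in\mbox{comm}(a)$. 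This identity $ada=a^2d$ is precisely $a(ad-da)=0$, so the left-transfer principle yields $c(ad-da)=0$ and hence $cd\in\mbox{comm}(a)$.

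Finally, part (iii) follows immediately: under $a\in\mbox{comm}_w(b)=\mbox{comm}_l(b)\cap\mbox{comm}_r(b)$ both sets of hypotheses hold, so (i) and (ii) together place each of $cb,ad,cd$ in $\mbox{comm}(a)\cap\mbox{comm}(b)$, while $c\in\mbox{comm}_w(b)$ is Proposition \ref{proppseudoring}(iii). The one place where genuine computation, rather than a cited result or a one-line absorption, seems unavoidable is the identity $ada=a^2d$ in part (ii); everything else is either quoted from Proposition \ref{proppseudoring} or a mechanical application of the two absorption identities $d=bd^2$ and $c=c^2a$.
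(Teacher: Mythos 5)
Your proof is correct and follows essentially the same route as the paper's: both rest on the absorption identities $c=c^{2}a=ac^{2}$, $d=d^{2}b=bd^{2}$ together with the double-commutant facts, and your two ``transfer principles'' are modular repackagings of the inline chains of equalities the paper writes out (your key identity $ada=a^{2}d$ is exactly the paper's computation $adb=abd=ab^{2}d^{2}=babd^{2}=bad$ transposed to part (ii), which the paper dismisses as ``goes similarly''). The only differences are cosmetic: you quote Proposition \ref{proppseudoring} for $cb\in\mbox{comm}(b)$ instead of recomputing it, and your path to $cd\in\mbox{comm}(b)$ passes through $cb\in\mbox{comm}(b)$ where the paper's passes through $ad\in\mbox{comm}(b)$.
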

\begin{proof} (i) Assume that $a\in\mbox{comm}_{r}(b).$ Since $ba\in \mbox{comm}(a)$ then $cb^{2}=c^{2}ab^{2}=c^{2}bab=(c^{2}(ba))b=bcb.$ Thus $cb\in \mbox{comm}(b).$ On the other hand, we have  $adb=abd=ab^{2}d^{2}=babd^{2}=bad$ and  $bcd=((ba)c^{2})d=c^{2}(b(ad))=c^{2}adb=cdb.$     Then $ad,cd\in \mbox{comm}(b).$ The point (ii) goes similarly with the first point  and the third point is clear.
\end{proof}
Let $\A$ be a ring. An element  $a\in \A$ is said to be Drazin  invertible of degree $n$  if  there exists $c \in \mbox{comm}(a)$ such that $c=c^{2}a$ and  $a^{n+1}c=a^{n}.$ In this case $c=a^{D},$  $c \in  \mbox{comm}^{2}\,(a)$ and if    $a$ is of degree $n$ and is not of degree $n-1,$ then $n$  is called the index of $a$ and  is denoted by $i(a)=n.$ For more details about this definition, we refer the reader to  \cite{Aiena1}.
Our  next proposition gives an extension  of  \cite[Proposition 2.6]{berkani}.
\begin{prop}\label{productDrazin}  Let  $a,b$ two Drazin invertible elements of   a ring $\A.$  The following assertions hold:\\
(i) If       $a\in\mbox{comm}_{r}(b)$ and $ab\in \mbox{comm}(a),$  then $ab$ is Drazin invertible and $(ab)^{D}=b^{D}a^{D}.$\\
(ii) If    $a\in\mbox{comm}_{l}(b),$  then $ab$ is Drazin invertible and $(ab)^{D}=a^{D}b^{D}.$\\
(iii) If   $a\in\mbox{comm}_{w}(b),$  then $ab$ is Drazin invertible, $(ab)^{D}=a^{D}b^{D}$ and $a^{D}\in\mbox{comm}(b^{D}).$
\end{prop}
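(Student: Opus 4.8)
The plan is to prove all three parts by writing down an explicit candidate for the Drazin inverse of $ab$ and verifying, in the ring sense, the three defining relations: that the candidate commutes with $ab$, that it is a pseudo-inverse of $ab$, and that it satisfies the index relation. Put $c=a^{D}$ and $d=b^{D}$. For (ii) and (iii) the candidate is $cd$, while for (i), where the hypotheses point in the opposite direction, it is $dc$; I would treat (ii) first and (i) by the same scheme. The preliminary step is to collect commutation data. Lemma \ref{propaboutinverses}(ii) supplies $c\in\mbox{comm}_{l}(b)$ and $cb,ad,cd\in\mbox{comm}(a)$; moreover $c\in\mbox{comm}^{2}(a)$ with $ab\in\mbox{comm}(a)$ gives $c(ab)=(ab)c$, and $d\in\mbox{comm}^{2}(b)$ with $ba\in\mbox{comm}(b)$ gives $d(ba)=(ba)d$. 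The decisive observation is that in (ii) the commutator $\delta:=ab-ba$ is annihilated on the left by $a$ and by $b$ (this is just a restatement of $aba=a^{2}b$ and $bab=b^{2}a$), hence also by $c=c^{2}a$ and $d=d^{2}b$; therefore $w\delta=0$ for every nonempty word $w$ in $a,b,c,d$, which means $ab$ may be replaced by $ba$ inside any product in which it is not the leftmost factor. In particular $(ab)^{n}=a^{n}b^{n}$ for all $n$.

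With this in hand the commutation relation is quick: since $cd\in\mbox{comm}(a)$ one has $cd\cdot ab=a(cd)b$, and pushing $c$ through $ab$ by $c(ab)=(ab)c$ together with $ac=ca$ and $bd=db$ gives $ab\cdot cd=a(cd)b$ as well, so $cd\cdot ab=ab\cdot cd$. Granting this, the pseudo-inverse relation $cd=(cd)^{2}(ab)$ collapses to the single identity $cd\cdot ab\cdot cd=cd$, and, using $(ab)^{n}=a^{n}b^{n}$, the index relation becomes $a^{k+1}b^{k+1}\,cd=a^{k}b^{k}$ for $k=\max\{i(a),i(b)\}$, to be reached through $a^{n+1}c=a^{n}$ and $b^{m+1}d=b^{m}$.

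I expect the identity $cd\cdot ab\cdot cd=cd$ (and, for the same reason, the index relation) to be the main obstacle. In the commutative case it is immediate from $c^{2}a=c$ and $d^{2}b=d$ after regrouping $(cd)^{2}ab$ as $(c^{2}a)(d^{2}b)$, but that regrouping is not available here: in (i) and (ii) the inverses $a^{D}$ and $b^{D}$ genuinely need not commute (so the prescribed left-to-right order in the formula is essential), and none of the legitimate moves allows one to slide a factor $c$ past a factor $d$. The plan is to avoid that swap entirely and to argue through the spectral idempotents $p=aa^{D}=ac$ and $q=bb^{D}=bd$ and the core--nilpotent splittings $a=a^{2}c+(a-a^{2}c)$, $b=b^{2}d+(b-b^{2}d)$: because $cd=p(cd)q$ and because $w\delta=0$ lets every ``mixed'' term be rewritten so that the nilpotent parts are pushed to one side and eventually killed, the product $cd\cdot ab\cdot cd$ reduces to $cd$ without ever transposing $c$ and $d$. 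Keeping this bookkeeping consistent---using only the one-sided commutations furnished by Lemma \ref{propaboutinverses} and the annihilation of $\delta$---is the delicate point.

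For (i) the roles are symmetric: the hypotheses now yield $\delta a=\delta b=0$ (together with $a\delta=0$), Lemma \ref{propaboutinverses}(i) gives $cb,ad,cd\in\mbox{comm}(b)$, and the same verification with the reversed candidate $dc$ shows $(ab)^{D}=b^{D}a^{D}$. Part (iii) then needs no new computation: when $a\in\mbox{comm}_{w}(b)=\mbox{comm}_{l}(b)\cap\mbox{comm}_{r}(b)$ both (ii) and (i) apply (the extra hypothesis $ab\in\mbox{comm}(a)$ needed for (i) is part of $\mbox{comm}_{l}(b)$), so $ab$ is Drazin invertible with $(ab)^{D}=cd$ by (ii) and $(ab)^{D}=dc$ by (i); uniqueness of the Drazin inverse forces $cd=dc$, that is $a^{D}\in\mbox{comm}(b^{D})$.
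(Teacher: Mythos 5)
Your overall skeleton is the same as the paper's (an explicit candidate inverse, verification of the three defining relations, and uniqueness of the Drazin inverse to deduce (iii) --- the paper's ``(iii) is clear'' is exactly your uniqueness argument), and the preliminary work is sound: the annihilation $w\delta=0$ correctly recovers the identity $(ab)^{n}=a^{n}b^{n}$ that the paper imports from \cite[Lemma 3.1]{aznay-ouahab-zariouh7}, and your verification of the commutation $cd\cdot ab=ab\cdot cd=a(cd)b$ is complete and correct. But the proposal stops exactly where the content of the proposition lies: the pseudo-inverse identity $cd\cdot ab\cdot cd=cd$ and the index identity $a^{n+1}b^{n+1}cd=a^{n}b^{n}$ are never established. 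What you offer in their place --- the idempotents $p=ac$, $q=bd$, core--nilpotent splittings, and the expectation that ``bookkeeping'' will push the nilpotent parts aside --- is a plan, not an argument, and you yourself flag it as the delicate point. Since (i), (ii), and therefore (iii) all rest on these two identities, the proposal as written does not prove the proposition. (For comparison: the paper proves (i) by carrying out precisely this computation, cites \cite[Theorem 3.1]{ZhuDrazin} for (ii), and gets (iii) by uniqueness.)

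The gap is real but fillable with facts you have already quoted, and much more cheaply than your plan suggests. Your stated obstacle --- that ``none of the legitimate moves allows one to slide a factor $c$ past a factor $d$'' --- is a red herring: you never need $cd=dc$, only $cdc=c^{2}d$, and that \emph{is} available. Indeed, in case (ii), Lemma \ref{propaboutinverses}(ii) gives $cd\in\mbox{comm}(a)$, and $c\in\mbox{comm}^{2}(a)$, so $c$ commutes with $cd$, i.e. $cdc=c^{2}d$. Then, using your own identity $ab\cdot cd=a(cd)b$ and $cd\in\mbox{comm}(a)$,
$$cd\cdot ab\cdot cd \;=\; cd\cdot a(cd)b \;=\; a\,(cd)(cd)\,b \;=\; a\,(cdc)d\,b \;=\; a\,c^{2}d^{2}\,b \;=\; (ac^{2})(d^{2}b) \;=\; cd,$$
since $ac^{2}=c$ and $d^{2}b=d$. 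For the index relation, with $n=\max\{i(a),i(b)\}$: using commutation, $(ab)^{n+1}=a^{n+1}b^{n+1}$, $cd\in\mbox{comm}(a)$, and $d\in\mbox{comm}(b)$,
$$(ab)^{n+1}cd \;=\; cd\,a^{n+1}b^{n+1} \;=\; a^{n+1}c\,(db^{n+1}) \;=\; a^{n+1}c\,b^{n} \;=\; (a^{n+1}c)\,b^{n} \;=\; a^{n}b^{n} \;=\; (ab)^{n}.$$
Case (i) is the same two-line computation for the candidate $dc$, using the swapped-roles form of Lemma \ref{propaboutinverses}(i): since $\mbox{comm}_{r}$ is a symmetric relation, $b\in\mbox{comm}_{r}(a)$, whence $dc\in\mbox{comm}(a)$ and $dc\cdot c=c\cdot dc=dc^{2}$; this is exactly the route the paper's displayed computation takes. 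So: right skeleton, right auxiliary lemmas, but the decisive verification is missing, and the detour through spectral idempotents you sketch is unnecessary once the double-commutant identity $cdc=c^{2}d$ is noticed.
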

\begin{proof}(i)  Assume that  $a\in\mbox{comm}_{r}(b)$ and let $n=\mbox{max}\{i(a),i(b)\}.$ From \cite[Lemma 3.1]{aznay-ouahab-zariouh7}  we deduce that  $b^{D}a^{D}(ab)^{n+1}=b^{D}a^{D}a^{n+1}b^{n+1}=b^{D}a^{n}b^{n+1}=(b^{D}(a^{n}b))b^{n}=a^{n}bb^{D}b^{n}=a^{n}b^{n}=(ab)^{n}.$ On the other hand, since  $ab\in \mbox{comm}(a)$ then $((ab)b^{D})a^{D})=b^{D}((ab)a^{D})=(b^{D}a^{D})ab.$ It follows from  Lemma \ref{propaboutinverses} that  $b^{D}a^{D}\in \mbox{comm}(a).$  Hence      $(b^{D}a^{D})(ab)(b^{D}a^{D})=(a(b^{D}a^{D}))bb^{D}a^{D}=ab^{D}((a^{D}b^{D})b)a^{D}=ab^{D}b(a^{D}(b^{D}a^{D}))=ab^{D}bb^{D}(a^{D})^{2}=ab^{D}(a^{D})^{2}=b^{D}a^{D}.$ This implies that  $ab$ is Drazin invertible, $(ab)^{D}=b^{D}a^{D}$ and $i(ab)\leq \mbox{max}\{i(a),i(b)\}.$  Also note that  (again from  Lemma \ref{propaboutinverses}) that  $(b^{D}a^{D})ab=b(b^{D})^{2}a^{D}ab=bb^{D}((b^{D}a^{D})a)b=b(b^{D}(ab^{D}))a^{D}b=ba((b^{D})^{2}(a^{D}b))=baa^{D}b(b^{D})^{2}=ba(a^{D}b^{D}).$ The point (ii) is done in    \cite[Theorem 3.1]{ZhuDrazin} and  the point  (iii) is clear.
\end{proof}
\begin{prop} Let $\A$ be a ring and let $a\in \A.$   If  there exists  $b \in \A$ and $n\in \N$ such that $a\in\mbox{comm}(ab)\cap\mbox{comm}(ba)$, $bab=b$ and $ba^{n+1}=a^{n},$ then $a$ is Drazin invertible and $a^{D}=b.$
\end{prop}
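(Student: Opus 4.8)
The plan is to show that the given $b$ is itself the Drazin inverse, by checking directly that it satisfies the three defining relations of a Drazin inverse of degree $n$: namely $b\in\mbox{comm}(a)$, $b=b^{2}a$, and $a^{n+1}b=a^{n}$. The real content lives in the commutation hypothesis $a\in\mbox{comm}(ab)\cap\mbox{comm}(ba)$ together with $bab=b$; the relation $ba^{n+1}=a^{n}$ is only needed at the very end to produce the third defining relation.

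First I would record the elementary consequences. Writing $p=ab$ and $q=ba$, the relation $bab=b$ shows that both are idempotent, since $p^{2}=abab=a(bab)=ab=p$ and $q^{2}=baba=(bab)a=ba=q$. The two commutation hypotheses unpack, respectively, into $a^{2}b=aba$ (from $a(ab)=(ab)a$) and $aba=ba^{2}$ (from $a(ba)=(ba)a$), so that altogether $a^{2}b=aba=ba^{2}$.

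The crux — and the step I expect to be the main obstacle — is to upgrade this second-order commutation to genuine commutativity $ab=ba$, i.e.\ $p=q$. The idea is to evaluate the product $(p-q)p$ in two different ways. On one hand, $(p-q)a=pa-qa=aba-ba^{2}=0$ by the previous paragraph, hence $(p-q)p=\big((p-q)a\big)b=0$. On the other hand, a short computation gives $qp=baab=b(a^{2}b)=b(aba)=(ba)(ba)=q^{2}=q$, so that by idempotency $(p-q)p=p^{2}-qp=p-q$. Comparing the two evaluations forces $p-q=0$, that is $ab=ba$.

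Once commutativity is secured, the rest is immediate. From $bab=b$ and $ab=ba$ we obtain $b=bab=b(ab)=b(ba)=b^{2}a$, which is exactly the relation $c=c^{2}a$ of the definition; and commuting $a$ past $b$ converts the hypothesis $ba^{n+1}=a^{n}$ into $a^{n+1}b=a^{n}$. Since moreover $b\in\mbox{comm}(a)$, the element $b$ fulfils all the defining requirements, and therefore $a$ is Drazin invertible (of degree at most $n$) with $a^{D}=b$.
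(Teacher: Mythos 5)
Your proof is correct, and for the crux of the argument it takes a genuinely different route from the paper. Both proofs reduce the proposition to establishing $ab=ba$, after which the conclusion is the same routine check against the definition of Drazin invertibility of degree $n$ recalled just before Proposition \ref{productDrazin} (namely $b\in\mbox{comm}(a)$, $b=b^{2}a$ from $bab=b$, and $a^{n+1}b=ba^{n+1}=a^{n}$). The paper obtains $ab=ba$ from the chain $ba=(ba)^{n+2}=b^{n+2}a^{n+2}=a^{n+2}b^{n+2}=(ab)^{n+2}=ab,$ where the outer equalities use the idempotency of $ab$ and $ba$ (a consequence of $bab=b$) and the inner power identities are imported wholesale from \cite[Lemma 3.1, Remark 3.3]{aznay-ouahab-zariouh7}. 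You instead prove $ab=ba$ by a self-contained idempotent computation: with $p=ab$ and $q=ba$, the relation $aba=ba^{2}$ gives $(p-q)p=\bigl((p-q)a\bigr)b=0$, while $p^{2}=p$ and $qp=q$ (the latter via $a^{2}b=aba$) give $(p-q)p=p-q$, forcing $p=q$. Your version buys independence from the companion paper's partial-commutation calculus, works in any ring with no auxiliary machinery, and makes visible that the hypothesis $ba^{n+1}=a^{n}$ plays no role in the commutativity step, being needed only for the final degree-$n$ relation; the paper's version buys brevity and consistency with the toolkit (identities of the form $(ab)^{k}=a^{k}b^{k}$) that it reuses throughout. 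Either way the statement stands, and your write-up is complete as given.
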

\begin{proof} As $a\in\mbox{comm}(ab)\cap\mbox{comm}(ba)$  then by \cite[Lemma 3.1, Remark 3.3]{aznay-ouahab-zariouh7}, we deduce that     $ba=(ba)^{n+2}=b^{n+2}a^{n+2}=a^{n+2}b^{n+2}=(ab)^{n+2}=ab.$   Thus  $b\in \mbox{comm}(a).$
\end{proof}

\section{ Perturbations of pseudo invertible  operators}

According to \cite{mbekhta}, the analytic core $\mathcal{K}(T)$  and the quasi-nilpotent part $\mathcal{H}_{0}(T)$  of $T\in L(X)$ are defined by
$$\mathcal{K}(T)=\{x \in X : \exists \epsilon>0, \exists (x_{n})_{n} \subset X \text{ such that } \forall n \in \N \,\, x=x_{0}, Tx_{n+1}=x_{n} \text{ and } \underset{n}{\sup}\, \|x_{n}\|^{\frac{1}{n}}<\infty \}$$
$$ \mbox{ and }\mathcal{H}_{0}(T)=\{x \in X : \lim\limits_{n \rightarrow \infty}\,\|T^{n}x\|^{\frac{1}{n}}=0\}.$$
\begin{lem}\label{lemmmf.1} Let $S, T \in L(X).$ The following assertions hold:\\
(i) If     $ST\in\mbox{comm}(T),$ then  $\R(T^{\infty})$  and $\mathcal{K}(T)$ are   $S$-invariant.\\
(ii) If     $TS\in\mbox{comm}(T),$ then $\mathcal{N}(T^{\infty})$ and   $\mathcal{H}_{0}(T)$    are $S$-invariant.
\end{lem}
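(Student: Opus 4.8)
The plan is to reduce everything to two elementary operator identities obtained by induction from the commutation hypotheses, and then read off each invariance from the appropriate identity. For part (i), the hypothesis $ST\in\mbox{comm}(T)$ is just $ST^{2}=TST$; starting from this I would prove by induction on $n$ that
$$ST^{n}=T^{n-1}ST\qquad(n\geq 1),$$
the inductive step being $ST^{n+1}=(ST^{n})T=T^{n-1}(ST^{2})=T^{n-1}(TST)=T^{n}ST$, where the second equality uses the induction hypothesis together with $(ST)T=ST^{2}$. Dually, for part (ii) the hypothesis $TS\in\mbox{comm}(T)$ reads $TST=T^{2}S$, and the same bookkeeping gives
$$T^{n}S=TST^{n-1}\qquad(n\geq 1).$$
These two identities are the whole engine; the rest is checking inclusions.

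For part (i), to see that $\R(T^{\infty})$ is $S$-invariant I would take $y\in\R(T^{\infty})$, so for each $n$ there is $z$ with $y=T^{n+1}z$, and then $Sy=ST^{n+1}z=T^{n}(STz)\in\R(T^{n})$; since this holds for every $n$, $Sy\in\R(T^{\infty})$. For the analytic core, given $x\in\mathcal{K}(T)$ with a witnessing sequence $(x_{n})$ ($x=x_{0}$, $Tx_{n+1}=x_{n}$, $\sup_{n}\|x_{n}\|^{1/n}<\infty$), the naive guess $y_{n}=Sx_{n}$ fails because $S$ and $T$ need not commute; instead I would set $y_{n}=STx_{n+1}$. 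Then $y_{0}=STx_{1}=Sx_{0}=Sx$, and $Ty_{n+1}=TSTx_{n+2}=ST^{2}x_{n+2}=STx_{n+1}=y_{n}$ using exactly $ST^{2}=TST$, while the growth condition $\sup_{n}\|y_{n}\|^{1/n}<\infty$ follows from $\|y_{n}\|\leq\|S\|\,\|T\|\,\|x_{n+1}\|$ together with the fact that $\|x_{n+1}\|^{1/n}=(\|x_{n+1}\|^{1/(n+1)})^{(n+1)/n}$ stays bounded. Hence $Sx\in\mathcal{K}(T)$.

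Part (ii) is the dual argument run through $T^{n}S=TST^{n-1}$. If $x\in\mathcal{N}(T^{\infty})$, say $T^{n}x=0$, then $T^{n+1}Sx=TST^{n}x=0$, so $Sx\in\mathcal{N}(T^{n+1})\subseteq\mathcal{N}(T^{\infty})$. If $x\in\mathcal{H}_{0}(T)$, then $\|T^{n}Sx\|=\|TST^{n-1}x\|\leq\|T\|\,\|S\|\,\|T^{n-1}x\|$, so $\|T^{n}Sx\|^{1/n}\leq(\|T\|\,\|S\|)^{1/n}(\|T^{n-1}x\|^{1/(n-1)})^{(n-1)/n}\to 0$, since the inner base tends to $0$ and the outer exponent tends to $1$; whence $Sx\in\mathcal{H}_{0}(T)$. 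The only genuinely non-routine point is the index shift $y_{n}=STx_{n+1}$ in the analytic-core argument: one must place a factor $T$ next to $S$ so that the relation $ST^{2}=TST$ can be applied, and this is precisely where non-commutativity would otherwise wreck the construction. Everything else is the two inductions plus the spectral-radius-type estimates above.
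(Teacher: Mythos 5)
Your proof is correct and follows essentially the same route as the paper: you derive the identities $ST^{n+1}=T^{n}ST$ and $T^{n+1}S=TST^{n}$ from the hypotheses and apply them to the ranges, kernels, the analytic core, and the quasi-nilpotent part exactly as the authors do, with the same norm estimates. One remark: since $Tx_{n+1}=x_{n}$, your sequence $y_{n}=STx_{n+1}$ is literally the ``naive guess'' $Sx_{n}$ that you claim fails --- the paper takes $y_{n}=Sx_{n}$ directly and runs the very same computation $Sx_{n}=ST^{2}x_{n+2}=TSTx_{n+2}=Ty_{n+1}$, so no index shift is actually needed.
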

\begin{proof}(i)  Let $n\geq 1$ and let  $x\in S(\R(T^{n+1})).$ Then  there exists $z$ such that $x=ST^{n+1}z,$  and since $TST=ST^{2}$ then $x=T^{n}STz\in \R(T^{n}).$  So   $S(\R(T^{n+1}))\subset \R(T^{n}).$ Hence   $\R(T^{\infty})$  is  $S$-invariant. If $x\in \mathcal{K}(T),$ then there exists $(x_{n})\subset X$ such that $x=x_{0},$  $x_{n}=Tx_{n+1}$ for every  $n\in \N$ and  $\underset{n}{\sup}\, \|x_{n}\|^{\frac{1}{n}}<\infty.$  We put    $y_{n}=Sx_{n}$ for every  $n\in \N.$ We then have   $y_{n}=Sx_{n}=TSTx_{n+2}=Ty_{n+1}$ and  $\underset{n}{\sup}\, \|y_{n}\|^{\frac{1}{n}}\leq \mbox{max}\,\{\|S\|,1\} \underset{n}{\sup}\, \|x_{n}\|^{\frac{1}{n}}<\infty.$  Therefore $Sx  \in \mathcal{K}(T)$ and  then  $\mathcal{K}(T)$ is  $S$-invariant.\\
 (ii) Since  $S(\mathcal{N}(T^{n}))\subset \mathcal{N}(T^{n+1})$ for every  $n>0$ then $\mathcal{N}(T^{\infty})$ is $S$-invariant.   Let $x\in \mathcal{H}_{0}(T),$ then  $\|T^{n+1}(Sx)\|^{\frac{1}{n+1}}=\|TST^{n}x\|^{\frac{1}{n+1}}\leq \|TS\|^{\frac{1}{n+1}}\|T^{n}x\|^{\frac{1}{n+1}}=\|TS\|^{\frac{1}{n+1}}(\|T^{n}x\|^{\frac{1}{n}})^{\frac{n}{n+1}}$ for every integer $n>0.$ Thus $Sx\in \mathcal{H}_{0}(T)$ and  then  $\mathcal{H}_{0}(T)$   is $S$-invariant.
\end{proof}
The next corollary is a  consequence of the previous lemma  and  Proposition \ref{proppseudoring}.
\begin{cor}\label{corinvariantinverse} Let $T \in L(X)$ be a pseudo invertible operator and if  $L$ is   a pseudo inverse of $T,$ then    the following assertions hold:\\
(i) If     $ST\in\mbox{comm}(T),$ then  $\R(L^{\infty})$ and $\mathcal{K}(L)$ are   $S$-invariant.\\
(ii) If     $TS\in\mbox{comm}(T),$ then $\mathcal{N}(L^{\infty})$ and   $\mathcal{H}_{0}(L)$    are $S$-invariant.
\end{cor}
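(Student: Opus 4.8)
The plan is to deduce everything from Lemma~\ref{lemmmf.1} read with the pseudo inverse $L$ playing the role of the operator, after first transporting the hypothesized commutation relation from $T$ across to $L$ by means of Proposition~\ref{proppseudoring}. Since $L\in L(X)$ is itself a pseudo inverse of $T,$ the four objects $\R(L^{\infty}),$ $\mathcal{K}(L),$ $\mathcal{N}(L^{\infty})$ and $\mathcal{H}_{0}(L)$ appearing in the corollary are precisely the quantities governed by Lemma~\ref{lemmmf.1} when that lemma is applied to $L$ in place of $T.$ Thus the whole statement reduces to checking that $S$ commutes with $L$ in the appropriate one-sided sense, and no new analytic estimate is needed beyond those already established.

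For assertion~(i), I would invoke Proposition~\ref{proppseudoring} with $a=T,$ $b=S$ and $c=L.$ The hypothesis $ST\in\mbox{comm}(T)$ is exactly the condition $ba\in\mbox{comm}(a)$ of part~(i) of that proposition, whose conclusion delivers $SL=bc\in\mbox{comm}(L)=\mbox{comm}(c).$ Reading this as $S'T'\in\mbox{comm}(T')$ with $S'=S$ and $T'=L,$ I then apply Lemma~\ref{lemmmf.1}(i) to $L$ and conclude that $\R(L^{\infty})$ and $\mathcal{K}(L)$ are $S$-invariant. For assertion~(ii) the route is symmetric: with the same identifications $a=T,$ $b=S,$ $c=L,$ the hypothesis $TS\in\mbox{comm}(T)$ matches the condition $ab\in\mbox{comm}(a)$ of Proposition~\ref{proppseudoring}(ii), which yields $LS=cb\in\mbox{comm}(L),$ and Lemma~\ref{lemmmf.1}(ii) applied to $L$ then gives the $S$-invariance of $\mathcal{N}(L^{\infty})$ and $\mathcal{H}_{0}(L).$

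There is essentially no obstacle of a computational or conceptual nature here; the only point that genuinely requires care is the bookkeeping of left versus right products, and I expect that to be where a careless argument would break. One must route the hypothesis $ST\in\mbox{comm}(T)$ through the \emph{left} branch, Proposition~\ref{proppseudoring}(i), in order to produce $SL\in\mbox{comm}(L),$ which is exactly the input $S'T'\in\mbox{comm}(T')$ that Lemma~\ref{lemmmf.1}(i) demands; dually, $TS\in\mbox{comm}(T)$ must be passed through part~(ii) so as to yield $LS\in\mbox{comm}(L)$ for Lemma~\ref{lemmmf.1}(ii). Once these two pairings are fixed correctly, both claims follow immediately by composing the two cited results.
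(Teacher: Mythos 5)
Your proof is correct and is exactly the route the paper intends: the paper states the corollary as an immediate consequence of Lemma~\ref{lemmmf.1} and Proposition~\ref{proppseudoring}, which is precisely your composition. You also pair the left/right hypotheses correctly ($ST\in\mbox{comm}(T)$ through Proposition~\ref{proppseudoring}(i) to get $SL\in\mbox{comm}(L)$ for Lemma~\ref{lemmmf.1}(i), and dually for (ii)), which is the only delicate point.
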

\begin{thm}\label{thm.drn}  If  $T\in L(X)$  and  $N\in \mbox{Nil}(L(X))$ such that    $N\in\mbox{comm}_{l}(T)$ and $T\in \mbox{comm}(NT)$ (or $N\in\mbox{comm}_{r}(T)$ and  $T\in \mbox{comm}(TN)$),   then  $T$ is Drazin invertible   if and only $T+N$ is Drazin invertible.  In  this  case,  we have  $\mathcal{N}(T^{\infty})=\mathcal{N}((T+N)^{\infty})$ and $\R(T^{\infty})=\R((T+N)^{\infty}).$
\end{thm}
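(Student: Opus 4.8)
The plan is to reduce the statement, after treating the first case (the second being symmetric, and in fact dual to it under $T\mapsto T^{*}$, $N\mapsto N^{*}$), to two facts: that $T$ and $N$ almost commute, and that on the relevant invariant subspaces the perturbation is either invisible or nilpotent. First I would record that the hypotheses $N\in\mbox{comm}_{l}(T)$ and $T\in\mbox{comm}(NT)$ are exactly the three identities $T^{2}N=TNT$, $N^{2}T=NTN$ and $TNT=NT^{2}$; equivalently, writing $c=TN-NT$, they say precisely that $Tc=cT=Nc=0$. From these one gets at once $c^{2}=(TN-NT)c=T(Nc)-N(Tc)=0$ and $NT^{2}=T^{2}N$, i.e. $N$ commutes with $T^{2}$. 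Next, since $TN\in\mbox{comm}(T)$ and $NT\in\mbox{comm}(T)$, Lemma \ref{lemmmf.1} (with $S=N$) shows that $\mathcal{N}(T^{\infty})$ and $\R(T^{\infty})$ (and also $\mathcal{H}_{0}(T)$, $\mathcal{K}(T)$) are $N$-invariant; being also $T$-invariant, they are invariant under $T+N$ and under $T-N$.

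For the forward implication, suppose $T$ is Drazin invertible of index $p$, so that $X=M\oplus W$ with $M=\R(T^{\infty})=\R(T^{p})$ and $W=\mathcal{N}(T^{\infty})=\mathcal{N}(T^{p})$, both $(T+N)$-invariant by the above. On $M$ the restriction $T_{M}$ is invertible, and restricting the identity $TNT=NT^{2}$ to $M$ and cancelling one invertible factor $T_{M}$ on the right gives $T_{M}N_{M}=N_{M}T_{M}$; as $N_{M}$ is nilpotent this forces $T_{M}+N_{M}=T_{M}(I+T_{M}^{-1}N_{M})$ to be invertible. On $W$ both $T_{W}$ and $N_{W}$ are nilpotent, and I claim $(T+N)_{W}$ is nilpotent too; granting this, $T+N$ decomposes as invertible $\oplus$ nilpotent, hence is Drazin invertible with $\R((T+N)^{\infty})=M=\R(T^{\infty})$ and $\mathcal{N}((T+N)^{\infty})=W=\mathcal{N}(T^{\infty})$.

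The main obstacle is therefore the following purely algebraic lemma: if $A,B$ are nilpotent operators and $d:=AB-BA$ satisfies $Ad=Bd=0$, then $A+B$ is nilpotent (with a local version: if $B$ is nilpotent, $A$ locally nilpotent and $Ad=Bd=0$, then $A+B$ is locally nilpotent). I would prove it by normal ordering: using $AB=BA+d$ one rewrites $(A+B)^{m}$ as a combination of sorted monomials $B^{a}A^{b}$ together with terms in which a single $d$ has been created. Since $Ad=Bd=0$ we have $w\,d=0$ for every nonempty word $w$ in $A,B$, so a created $d$ survives only with nothing to its left; and since $d^{2}=0$, at most one $d$ survives. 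Thus every surviving term is $B^{a}A^{b}$ or $d\,B^{a}A^{b}$, of total length $m$ or $m-2$ respectively, and once $m$ exceeds the sum of the nilpotency indices every such term has $B^{a}=0$ or $A^{b}=0$; hence $(A+B)^{m}=0$. Applying this on $W$ with $A=T_{W}$, $B=N_{W}$, $d=c_{W}$ completes the forward direction.

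For the converse I would first note that the same lemma yields both inclusions of $\mathcal{N}(T^{\infty})=\mathcal{N}((T+N)^{\infty})$ unconditionally: on $\mathcal{N}(T^{\infty})$ the pair $(T,N)$ (local version) shows $T+N$ is locally nilpotent, while on $\mathcal{N}((T+N)^{\infty})$—which is $N$-invariant by Lemma \ref{lemmmf.1}(ii) applied to $T+N$, since $(T+N)N\in\mbox{comm}(T+N)$—the pair $(T+N,-N)$, whose commutator $-c$ still satisfies $(T+N)(-c)=(-N)(-c)=0$, shows $T=(T+N)+(-N)$ is locally nilpotent. If now $T+N$ is Drazin invertible, then $\mathcal{N}(T^{\infty})=\mathcal{N}((T+N)^{\infty})$ is closed with $T$ nilpotent on it, so $p(T)<\infty$; applying the whole discussion to the adjoints $T^{*},N^{*}$ (which satisfy the second, $\mbox{comm}_{r}$, form of the hypotheses) gives $p(T^{*})<\infty$, hence $q(T)=p(T^{*})<\infty$, whence $T$ is Drazin invertible, and the asserted equalities of hyper-kernels and hyper-ranges follow from the decomposition. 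The delicate point to get right is the genuine lack of symmetry between $T$ and $T+N$: the space $\R((T+N)^{\infty})$ need not be $N$-invariant, as that would require $cN=0$, which can fail, and this is precisely why the converse must be routed through the null spaces and the adjoint/$\mbox{comm}_{r}$ version rather than by a naive substitution $N\mapsto -N$.
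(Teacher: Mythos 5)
Your forward direction is correct, and it is essentially the paper's own argument: decompose $X=\R(T^{p})\oplus\mathcal{N}(T^{p})$, obtain $N$-invariance of both summands from Lemma \ref{lemmmf.1}, cancel the invertible factor to get genuine commutation $T_{M}N_{M}=N_{M}T_{M}$ on $\R(T^{p})$, and treat the nilpotent part by a ``sum of weakly commuting nilpotents is nilpotent'' lemma. The paper imports that last ingredient from \cite{aznay-ouahab-zariouh7}; your normal-ordering proof of it (every nonempty word times $d$ vanishes, $d^{2}=0$, so only $B^{a}A^{b}$ and $dB^{a}A^{b}$ survive) is a correct, self-contained substitute, and your unconditional identity $\mathcal{N}(T^{\infty})=\mathcal{N}((T+N)^{\infty})$, with the consequence $p(T)<\infty$ when $T+N$ is Drazin invertible, is also correct. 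You are moreover right -- and more careful than the paper, which dismisses the converse with ``the converse is clear'' -- that the hypotheses are not symmetric under $(T,N)\mapsto(T+N,-N)$: the new commutator is $-c$, and while $(T+N)c=Nc=0$, the missing identity $c(T+N)=cN$ need not vanish, so the forward implication cannot simply be re-invoked.

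The converse, however, has a genuine gap at its final step: $q(T)=p(T^{*})$ is not a valid identity. Since $\mathcal{N}((T^{*})^{n})=\R(T^{n})^{\perp}$, the ascent $p(T^{*})$ detects stabilization of the \emph{closures} $\overline{\R(T^{n})}$, so in general only $p(T^{*})\leq q(T)$ holds; the inequality you need, $q(T)\leq p(T^{*})$, fails, e.g.\ for $D=\mathrm{diag}(1,\tfrac{1}{2},\tfrac{1}{3},\dots)$ on $\ell^{2}$, where $p(D)=p(D^{*})=0$ but $q(D)=\infty$. The same example shows that ``$p(T)<\infty$ and $p(T^{*})<\infty$'' -- which is all your duality detour can deliver -- does not imply Drazin invertibility. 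The repair is to run your block analysis on the Drazin decomposition of $S=T+N$ itself. Put $M'=\R(S^{\infty})$, $W=\mathcal{N}(S^{\infty})$; then $W$ is $N$-invariant (Lemma \ref{lemmmf.1}(ii), since $SN\in\mbox{comm}(S)$), though $M'$ need not be, so with respect to $X=M'\oplus W$ one has $S=S_{1}\oplus S_{0}$ and $N=\bigl(\begin{smallmatrix}N_{11}&0\\ N_{21}&N_{22}\end{smallmatrix}\bigr)$. The $(1,1)$ block of $Sc=0$ (note $Sc=Tc+Nc=0$) reads $S_{1}(S_{1}N_{11}-N_{11}S_{1})=0$, so the nilpotent $N_{11}$ commutes with the invertible $S_{1}$ and $T_{11}=S_{1}-N_{11}$ is invertible; the $(2,2)$ blocks of $Sc=0$ and $Nc=0$ give $S_{0}c_{22}=N_{22}c_{22}=0$, so your own lemma makes $T_{22}=S_{0}-N_{22}$ nilpotent. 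Finally, any block lower-triangular operator with invertible $(1,1)$ block and nilpotent $(2,2)$ block is Drazin invertible: its Drazin inverse is $\bigl(\begin{smallmatrix}T_{11}^{-1}&0\\ E&0\end{smallmatrix}\bigr)$ with $E=\sum_{i\geq 0}T_{22}^{i}(-N_{21})T_{11}^{-2-i}$, a finite sum. This closes the converse, and the stated equalities of hyper-ranges and hyper-kernels then follow by applying your forward direction to $T$.
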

\begin{proof} Suppose that  $T$  is Drazin invertible, that is,  $p:=p(T)=q(T)<\infty.$ Then  $(A,B):=(\R(T^{p}),\mathcal{N}(T^{p}))\in Red(T),$ $T_{A}$ is invertible and $T_{B}$ is nilpotent.  From  Lemma \ref{lemmmf.1}  we deduce that  $(A,B)\in Red(N),$      and so   $T=T_{A}\oplus T_{B}$ and $N=N_{A}\oplus N_{B}.$  Therefore  $T+N=(T+N)_{A}\oplus  (T+ N)_{B}.$ By hypotheses and the fact that   $T_{A}$ is invertible and $N_{A}$ is nilpotent, we conclude that   $N_{A}\in\mbox{comm}(T_{A}).$  Thus    $(T+N)_{A}$ is  invertible and by \cite[Lemma  3.8]{aznay-ouahab-zariouh7}, it follows that       $T+N$ is Drazin invertible.  The converse is clear.
\end{proof}

For $T\in L(X),$  denote   by $\mbox{IRed}(T)=\{(M,N)\in \mbox{Red}(T): T_{M} \text{ is invertible}  \text{ and }  (M,N)\in \mbox{Red}(U) \text{ for all }   U\in \mbox{comm}(T) \}.$ If   $T$ is   pseudo invertible,  denote by    $\mbox{PI}(T)$   the set of  its  pseudo inverses.    Note that the  class  of pseudo invertible operators  is much broader, it contains in particular the class of   $g_{z}$-invertible operators, see \cite[Remark 4.19]{aznay-ouahab-zariouh6}.
\begin{prop}\label{34} $T \in L(X)$ is pseudo invertible if and only if there exists $(M,N)\in \mbox{Red}(U)$ such that $T_{M}$ is invertible for every  $U\in \mbox{comm}(T).$ If this is the case,   the map  $\Phi: \mbox{IRed}(T) \longrightarrow \mbox{PI}(T)$ defined by $\Phi(M,N)=(T_{M})^{-1}\oplus 0_{N}$  is  onto.
\end{prop}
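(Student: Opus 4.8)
The plan is to establish the equivalence and the surjectivity of $\Phi$ at the same time, since the two directions of the biconditional correspond exactly to the well-definedness and the ontoness of $\Phi$. The whole argument rests on a single device. To a pseudo inverse $L$ of $T$ I attach the operator $P=TL$; and conversely, to an admissible pair $(M,N)$ I attach $(T_M)^{-1}\oplus 0_N$. The first observation I would record is that since $L\in\mbox{comm}^{2}(T)=\mbox{comm}(\mbox{comm}(T))$ and $T\in\mbox{comm}(T)$, the pseudo inverse $L$ commutes with $T$; hence $P=TL=LT$ is a genuine one-variable object and, crucially, $P$ commutes with every $U\in\mbox{comm}(T)$ (as both $L$ and $T$ do).

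For the implication starting from $(M,N)\in\mbox{IRed}(T)$ (which also proves $\Phi$ lands in $\mbox{PI}(T)$), I would set $L=(T_M)^{-1}\oplus 0_N$. Because $(M,N)\in\mbox{Red}(T)$, both $T$ and $L$ are block-diagonal for $X=M\oplus N$, so $L^{2}T=((T_M)^{-2}T_M)\oplus 0_N=(T_M)^{-1}\oplus 0_N=L$ is immediate. To get $L\in\mbox{comm}^{2}(T)$, take any $U\in\mbox{comm}(T)$; by the definition of $\mbox{IRed}(T)$ we have $(M,N)\in\mbox{Red}(U)$, so $U=U_M\oplus U_N$, and restricting $UT=TU$ to the invariant subspace $M$ gives $U_MT_M=T_MU_M$. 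Since $T_M$ is invertible, $U_M$ commutes with $(T_M)^{-1}$, whence $LU=UL$. Thus $L$ is a pseudo inverse of $T$, so $T$ is pseudo invertible and $\Phi(M,N)\in\mbox{PI}(T)$.

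For the converse, and for the surjectivity, I would start from an arbitrary pseudo inverse $L$ and produce a preimage. Using $L=L^{2}T$ one checks $P=LT$ is idempotent: $P^{2}=LTLT=L^{2}T^{2}=(L^{2}T)T=LT=P$, so $M=\R(P)$ and $N=\NN(P)$ are closed with $X=M\oplus N$. Since $P$ commutes with every $U\in\mbox{comm}(T)$, both $M$ and $N$ are $U$-invariant, giving $(M,N)\in\mbox{Red}(U)$ for all such $U$; in particular $(M,N)\in\mbox{Red}(T)$ and $(M,N)\in\mbox{Red}(L)$. On $M$ the idempotent $P=LT$ acts as the identity, so $L_MT_M=T_ML_M=\mathrm{id}_M$, i.e.\ $T_M$ is invertible with $(T_M)^{-1}=L_M$; this yields $(M,N)\in\mbox{IRed}(T)$, proving the characterization. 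Finally, for $x\in N$ we have $Lx=L^{2}Tx=L(Px)=0$, so $L_N=0$, and therefore $\Phi(M,N)=(T_M)^{-1}\oplus 0_N=L_M\oplus L_N=L$, establishing that $\Phi$ is onto.

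The step I expect to be the main obstacle is the reverse construction: recognizing $P=LT$ as the correct idempotent and then verifying that it recovers $L$ exactly — that is, that $T_M$ is invertible with inverse precisely $L_M$ and that $L_N$ vanishes. The identity $L=L^{2}T$ is what forces both $P^{2}=P$ and $L_N=0$, so I would be careful to use it in both places. The one routine point to keep honest is the passage from \emph{$P$ commutes with every $U\in\mbox{comm}(T)$} to \emph{$(M,N)\in\mbox{Red}(U)$}, which is exactly the standard fact that the range and kernel of an idempotent commuting with $U$ are $U$-invariant; this is what upgrades $L\in\mbox{comm}^{2}(T)$ into the defining reducing property of $\mbox{IRed}(T)$.
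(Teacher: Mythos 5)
Your proof is correct and follows essentially the same route as the paper's: both directions hinge on the idempotent $TL=LT$ (the paper's $TS$), with $(\R(P),\NN(P))$ serving as the preimage of a given pseudo inverse and the block-diagonal operator $(T_M)^{-1}\oplus 0_N$ verified to lie in $\mbox{comm}^{2}(T)$ via the reducing property of $\mbox{IRed}(T)$. Your use of $P_M=\mathrm{id}_M$ to get invertibility of $T_M$ with inverse $L_M$ in one stroke is a slightly cleaner packaging of the paper's separate injectivity/surjectivity check, but it is the same argument.
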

\begin{proof} Assume that $T$ is pseudo invertible and let   $S \in \mbox{PI}(T).$   Then $TS$ is a projection and    $(M,N):=(\R(TS),\mathcal{N}(TS)) \in \mbox{Red}(T).$ Let $U\in \mbox{comm}(T),$ then $U(M)=\R(UTS)=\R(TSU)\subset M$ and   $U(N)\subset N.$  Thus  $(M,N)\in \mbox{Red}(U).$  Moreover, if $x\in \mathcal{N}(T_{M}),$ then $x=TSy$ and $Tx=0.$ Therefore $x=(TS)^2y=STx=0.$  If $x=TSy\in M,$ then   $x=T(TS(Sy))\in T(M).$ Thus       $T_{M}$ is  invertible. Let us  show that $S=(T_{M})^{-1}\oplus 0_{N}.$ We have   $S_{N}=0_{N},$ since   $S=STS.$    Let $x=TSy \in M.$  As $Sy=STSy \in M$ then $Sx=Sy=(T_{M})^{-1}T_{M}Sy=(T_{M})^{-1}x.$ Hence $S=(T_{M})^{-1}\oplus 0_{N}.$  Conversely,   if   $(M,N)\in \mbox{IRed}(T),$  then  the operator  $S=(T_{M})^{-1}\oplus 0_{N}$ gives  the desired result. Indeed,   it is clear that    $S=S^{2}T,$ and  if  $A\in \mbox{comm}(T),$ then $(M,N)\in \mbox{Red}(A).$  So  $A_{M}\in \mbox{comm}((T_{M})^{-1}).$ Therefore $A \in \mbox{comm}{(S)}$ and consequently  $S \in \mbox{comm}^{2}(T).$
\end{proof}
It follows from  Proposition \ref{34} that every pseudo inverse $S$ of a  pseudo invertible operator  $T$ is Drazin invertible with $p(S)=q(S)\leq 1.$  Hence $\mbox{IRed}(T)\subset\{(\R(S),\mathcal{N}(S)): S \in \mbox{PI}(T)\}.$  Moreover, if $T$ is generalized Drazin invertible, then  $(\mathcal{K}(T),\mathcal{H}_{0}(T))\in \mbox{IRed}(T).$
\begin{cor}\label{corired} Let $T \in L(X).$ Then   $\mbox{IRed}(T)= \{(M,N)\in \mbox{Red}(T): T_{M} \text{ is invertible}  \text{ and }  (M,N)\in \mbox{Red}(U) \text{ for all }   U\in  L(X) \text{ such that }  T\in  \mbox{comm}(TU)\cap\mbox{comm}(UT)\}.$
\end{cor}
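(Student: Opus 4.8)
The plan is to prove Corollary \ref{corired} by showing that the set on the right-hand side, call it $\mathcal{S}$, coincides with $\mbox{IRed}(T)$ through a two-way inclusion. The key observation is that the defining condition of $\mbox{IRed}(T)$ quantifies over all $U\in\mbox{comm}(T)$, whereas $\mathcal{S}$ quantifies over the a priori larger class of $U$ satisfying $T\in\mbox{comm}(TU)\cap\mbox{comm}(UT)$. Since $\mbox{comm}(T)$ is contained in this larger class (if $UT=TU$ then certainly $T(TU)=(TU)T$ and $T(UT)=(UT)T$), the inclusion $\mathcal{S}\subseteq\mbox{IRed}(T)$ is immediate: any pair reducing every $U$ in the wider class in particular reduces every commuting $U$.

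For the reverse inclusion $\mbox{IRed}(T)\subseteq\mathcal{S}$, I would take $(M,N)\in\mbox{IRed}(T)$ and an arbitrary $U\in L(X)$ with $T\in\mbox{comm}(TU)\cap\mbox{comm}(UT)$, and show $(M,N)\in\mbox{Red}(U)$. The natural tool here is the characterization supplied by Proposition \ref{34}: since $(M,N)\in\mbox{IRed}(T)$, the operator $S=(T_M)^{-1}\oplus 0_N$ is a pseudo inverse of $T$, and by the structure established in that proposition, $M=\R(TS)=\R(S^\infty)$ type data attached to $S$ while $N$ is the corresponding kernel part. I would then apply Corollary \ref{corinvariantinverse}: the hypotheses $T\in\mbox{comm}(UT)$ and $T\in\mbox{comm}(TU)$ are exactly $UT\in\mbox{comm}(T)$ and $TU\in\mbox{comm}(T)$, which let me conclude that $\R(L^\infty)$, $\mathcal{K}(L)$, $\mathcal{N}(L^\infty)$ and $\mathcal{H}_0(L)$ are $U$-invariant for the pseudo inverse $L=S$.

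The heart of the argument is therefore matching the subspaces $M$ and $N$ to the invariant objects produced by Corollary \ref{corinvariantinverse}. Because $S$ is itself Drazin invertible with $p(S)=q(S)\le 1$ (as remarked just before the corollary), the analytic core and quasi-nilpotent part of $S$ collapse to the range and kernel of the projection $TS$: one expects $\R(S^\infty)=\R(S)=M$ and $\mathcal{N}(S^\infty)=\mathcal{N}(S)=N$ together with $\mathcal{K}(S)=M$ and $\mathcal{H}_0(S)=N$. Establishing these identities, using that $TS$ is a projection onto $M$ along $N$ and that $S$ acts invertibly on $M$ and as $0$ on $N$, is what converts the $S$-invariance statements into the assertion that $M$ and $N$ are $U$-invariant, giving $(M,N)\in\mbox{Red}(U)$.

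The main obstacle I anticipate is precisely this identification of $M$ and $N$ with the four spectral subspaces of $S$ in a way that is valid over a general Banach space rather than merely formally. Once one knows $S$ is Drazin invertible of degree at most one, the decomposition $X=M\oplus N$ with $S_M$ invertible and $S_N=0$ forces $\mathcal{H}_0(S)=N$ and $\mathcal{K}(S)=\R(S^\infty)=M$, but care is needed to verify that $U$-invariance of $\mathcal{K}(S)$ and $\mathcal{H}_0(S)$ (rather than, say, only of the closed range $\R(S)$) genuinely yields $U(M)\subseteq M$ and $U(N)\subseteq N$ simultaneously; combining the two halves of Corollary \ref{corinvariantinverse} and checking that both invariances hold under the single pair of hypotheses on $U$ is the step I would treat most carefully, after which $(M,N)\in\mbox{Red}(U)$ and hence $(M,N)\in\mathcal{S}$ follows.
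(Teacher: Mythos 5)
Your proposal is correct and follows essentially the same route as the paper: the trivial inclusion via $\mbox{comm}(T)$ being contained in the larger class, then for the reverse inclusion taking $S=\Phi(M,N)=(T_M)^{-1}\oplus 0_N$ from Proposition \ref{34}, noting $(M,N)=(\R(S^{\infty}),\mathcal{N}(S^{\infty}))$ since $S$ is Drazin invertible of degree at most one, and applying Corollary \ref{corinvariantinverse} (whose two parts apply simultaneously because $T\in\mbox{comm}(TU)\cap\mbox{comm}(UT)$ gives both $TU\in\mbox{comm}(T)$ and $UT\in\mbox{comm}(T)$) to get $U$-invariance of both subspaces. The identification step you flag as delicate is exactly the one the paper dismisses as clear, and your verification of it is sound.
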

\begin{proof} Let $C_{T}=\{(M,N)\in \mbox{Red}(T): T_{M} \text{ is invertible}  \text{ and }  (M,N)\in \mbox{Red}(U) \text{ for all }   U\in  L(X)  \text{ such } \newline \text{that }    T\in  \mbox{comm}(TU)\cap\mbox{comm}(UT)\}.$ It is clear that $C_{T}\subset\mbox{IRed}(T).$  For the opposite inclusion,  assume that $T$ is pseudo invertible (in the  other case   $\mbox{IRed}(T)=\emptyset$). Let  $(M,N)\in \mbox{IRed}(T)$ and let  $U\in L(X)$ such that $T\in \mbox{comm}(TU)\cap\mbox{comm}(UT)$  and we  consider  the pseudo inverse  operator  $S=\Phi(M,N)$   of $T$ associated to $(M,N).$    It is clear that  $S$ is Drazin invertible, and  hence $(M,N)=(\R(S),\mathcal{N}(S))=(\R(S^{\infty}),\mathcal{N}(S^{\infty})).$ As  $T \in \mbox{comm}(TU)\cap\mbox{comm}(UT),$   $S \in \mbox{comm}^{2}(T)$  and  $S=S^{2}T,$  it follows from    Corollary \ref{corinvariantinverse}  that $(M,N) \in \mbox{Red}(U).$
\end{proof}
\begin{prop}\label{propriesz}
Let  $S,T \in L(X).$ The following assertions hold:\\
(i)  If $T$ or $S$ is  Riesz and  $TS \in \mbox{comm}(T)\cup\mbox{comm}(S),$   then  $TS$  is  Riesz.\\
(ii)   If $T$ and $S$ are Riesz and  $S \in \mbox{comm}_{r}(T)\cup  \mbox{comm}_{l}(T),$ then    $T + S$ is Riesz.
\end{prop}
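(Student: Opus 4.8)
The plan is to push everything into the Calkin algebra $\mathcal{C}=L(X)/K(X)$, where, by the characterization recalled in the introduction, an operator $U$ is Riesz exactly when its image $\pi(U)=U+K(X)$ is quasi-nilpotent, i.e. $r(\pi(U))=0$. Write $\mathbf{t}=\pi(T)$ and $\mathbf{s}=\pi(S)$, so that $r(\mathbf{t}),r(\mathbf{s})<\infty$; since $\pi$ is a homomorphism, every algebraic (commutation) hypothesis descends from $L(X)$ to $\mathcal{C}$. The proof then splits according to the two parts.

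For (i), each alternative in $TS\in\mbox{comm}(T)\cup\mbox{comm}(S)$ forces $\mathbf{t}\mathbf{s}$ to commute with $\mathbf{t}$ or with $\mathbf{s}$ in $\mathcal{C}$. In either case the monomial identity $(\mathbf{t}\mathbf{s})^{n}=\mathbf{t}^{n}\mathbf{s}^{n}$ holds for all $n$: this is the elementary induction underlying \cite[Lemma 3.1]{aznay-ouahab-zariouh7} (if $ab$ commutes with $b$ then $ab$ commutes with every $b^{n}$, whence $(ab)^{n+1}=a^{n}b^{n}ab=a^{n}(ab)b^{n}=a^{n+1}b^{n+1}$, and symmetrically if $ab$ commutes with $a$). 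Consequently $\|(\mathbf{t}\mathbf{s})^{n}\|^{1/n}=\|\mathbf{t}^{n}\mathbf{s}^{n}\|^{1/n}\le\|\mathbf{t}^{n}\|^{1/n}\|\mathbf{s}^{n}\|^{1/n}$, and passing to the limit gives $r(\mathbf{t}\mathbf{s})\le r(\mathbf{t})\,r(\mathbf{s})$. As one factor is $0$ ($T$ or $S$ Riesz) and the other is finite, $r(\mathbf{t}\mathbf{s})=0$, i.e. $TS$ is Riesz.

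Part (ii) is where I expect the \emph{main obstacle}, since a sum of quasi-nilpotents need not be quasi-nilpotent without commutativity. The key algebraic observation I would isolate is that, with $k=\mathbf{t}\mathbf{s}-\mathbf{s}\mathbf{t}$, the hypothesis $S\in\mbox{comm}_{l}(T)$ unwinds in $\mathcal{C}$ to $\mathbf{t}^{2}\mathbf{s}=\mathbf{t}\mathbf{s}\mathbf{t}$ and $\mathbf{s}^{2}\mathbf{t}=\mathbf{s}\mathbf{t}\mathbf{s}$, that is $\mathbf{t}k=\mathbf{s}k=0$; dually $S\in\mbox{comm}_{r}(T)$ yields $k\mathbf{t}=k\mathbf{s}=0$. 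Thus the commutator is annihilated from one fixed side by both generators. From $\mathbf{t}\mathbf{s}=\mathbf{s}\mathbf{t}+k$ this gives a "free transposition away from the boundary" rule: in any word in $\mathbf{t},\mathbf{s}$ of length $n$ one may interchange an adjacent $\mathbf{t}\mathbf{s}$/$\mathbf{s}\mathbf{t}$ pair at no cost, provided the pair does not sit at the very front (resp. very back in the $\mbox{comm}_{r}(T)$ case), because any generator standing immediately to the left (resp. right) of $k$ kills it. Hence the first letter (resp. last letter) of a word is frozen while the remaining $n-1$ letters may be permuted arbitrarily, using only interior swaps at positions $2,\dots,n$ (resp. $1,\dots,n-1$).

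Therefore every length-$n$ word collapses to a single monomial: one beginning with $\mathbf{t}$ equals $\mathbf{t}^{i}\mathbf{s}^{n-i}$, one beginning with $\mathbf{s}$ equals $\mathbf{s}^{n-i}\mathbf{t}^{i}$. Expanding $(\mathbf{t}+\mathbf{s})^{n}$ as the sum of its $2^{n}$ words and bounding each monomial by submultiplicativity, I would close with a spectral-radius estimate: given $\epsilon>0$ pick $C_{\epsilon}$ with $\|\mathbf{t}^{j}\|,\|\mathbf{s}^{j}\|\le C_{\epsilon}\,\epsilon^{j}$ for all $j$ (possible as $r(\mathbf{t})=r(\mathbf{s})=0$), so each monomial has norm $\le C_{\epsilon}^{2}\epsilon^{n}$ and hence $\|(\mathbf{t}+\mathbf{s})^{n}\|\le 2^{n}C_{\epsilon}^{2}\epsilon^{n}$, giving $r(\mathbf{t}+\mathbf{s})\le 2\epsilon$. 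Letting $\epsilon\to0$ yields $r(\mathbf{t}+\mathbf{s})=0$, i.e. $T+S$ is Riesz. The one delicate point to verify carefully is the boundary bookkeeping in the sorting step, namely that every transposition used genuinely avoids the frozen end; this holds because the sorting never needs a swap at the forbidden extremal position.
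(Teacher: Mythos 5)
Your proposal is correct, and its opening move is exactly the paper's proof: by the characterization recalled in the introduction, $U$ is Riesz if and only if $\pi(U)$ is quasi-nilpotent in the Calkin algebra, and all the commutation hypotheses descend through the homomorphism $\pi$. The difference is what happens after that reduction. The paper's entire argument at that point is a citation of \cite[Corollary 3.10]{aznay-ouahab-zariouh7}, the companion paper in which the stability of quasi-nilpotence under these one-sided commutation hypotheses is proved for arbitrary Banach algebras; you instead prove the needed algebra facts from scratch. Your part (i) induction (that $(\mathbf{t}\mathbf{s})^{n}=\mathbf{t}^{n}\mathbf{s}^{n}$ whenever $\mathbf{t}\mathbf{s}$ commutes with one of the factors) reconstructs in effect the cited Lemma 3.1 of that companion paper, and your part (ii) is a genuine self-contained substitute for Corollary 3.10: the translation of $S\in\mbox{comm}_{l}(T)$ into $\mathbf{t}k=\mathbf{s}k=0$ for the commutator $k=\mathbf{t}\mathbf{s}-\mathbf{s}\mathbf{t}$ (and of $S\in\mbox{comm}_{r}(T)$ into $k\mathbf{t}=k\mathbf{s}=0$) is exactly right; the interior-swap sorting is sound, since every transposition you perform has a generator sitting on the annihilating side of the resulting $\pm k$ term, and adjacent transpositions of the unfrozen $n-1$ positions suffice to sort; and the collapse to two-block monomials $\mathbf{t}^{i}\mathbf{s}^{n-i}$ or $\mathbf{s}^{n-i}\mathbf{t}^{i}$ yields $\|(\mathbf{t}+\mathbf{s})^{n}\|\le 2^{n}C_{\varepsilon}^{2}\varepsilon^{n}$ and hence $r(\mathbf{t}+\mathbf{s})\le 2\varepsilon$ for every $\varepsilon>0$, as claimed. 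The trade-off between the two routes: the paper's proof is two lines but unreadable without the arXiv companion, while yours is longer but keeps the whole argument inside the present paper and exposes the actual mechanism (one-sided annihilation of the commutator) that makes these ``not necessarily commutative'' hypotheses behave like commutativity for spectral-radius estimates.
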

\begin{proof} As mentioned above,  $R\in L(X)$ is Riesz if and only if $\pi(R)$ is quasi-nilpotent in the Calkin algebra $L(X)/K(X).$    The proof  is then  a  consequence of  \cite[Corollary  3.10]{aznay-ouahab-zariouh7}.
\end{proof}
   Our next theorem generalizes some  known commutative  perturbation results.

\begin{thm}\label{propgenriesznilp} Let $R, T\in L(X)$  such that  $R$ is Riesz,  $R\in\mbox{comm}_{l}(T)$ and $T \in \mbox{comm}(TR)$ {\rm[}or $R\in\mbox{comm}_{r}(T)$ and $T \in \mbox{comm}(RT)${\rm]}. Then  $T$ is generalized Drazin-Riesz invertible if and only if $T+R$ is generalized Drazin-Riesz invertible.  If this is the case, that is, $T$ is generalized Drazin-Riesz invertible,  then  \\
(i) $\sigma_{*}(T)=\sigma_{*}(T+R),$ where  $\sigma_{*}\in\{\sigma_{e},\sigma_{uf},\sigma_{lf},\sigma_{w},\sigma_{uw},\sigma_{lw},\sigma_{b},\sigma_{ub},\sigma_{lb}\}.$\\
(ii) If $R\in \mbox{Nil}(L(X)),$ then    $\sigma_{+}(T)\setminus\{0\}=\sigma_{+}(T+R)\setminus\{0\}$ and $\sigma_{++}(T)=\sigma_{++}(T+R),$  where  $\sigma_{+}\in\{\sigma_{bf},\sigma_{ubf},\sigma_{lbf},\sigma_{bw},\sigma_{ubw},\sigma_{lbw},\sigma_{d},\sigma_{ld},\sigma_{rd}\}$ and  $\sigma_{++}\in\{\sigma_{bf},\sigma_{bw},\sigma_{d}\}.$  If in addition $X$ is a Hilbert space, then  $\sigma_{+}(T)=\sigma_{+}(T+R).$\\
(iii) If $R$ is in particular  quasi-nilpotent, then $\mbox{acc}\,\sigma_{-}(T)\setminus\{0\}=\mbox{acc}\,\sigma_{-}(T+R)\setminus\{0\},$ where  $\sigma_{-}\in\{\sigma,\sigma_{a},\sigma_{s}\}.$ If  in addition $T$ is generalized Drazin invertible, then $T+R$ is generalized Drazin invertible,  $\sigma(T)=\sigma(T+R),$ $\sigma_{\times}(T)\setminus\{0\}=\sigma_{\times}(T+R)\setminus\{0\}$  and  $\mbox{acc}\,\sigma_{\times}(T)=\mbox{acc}\,\sigma_{\times}(T+R),$ where  $\sigma_{\times}\in\{\sigma_{a},\sigma_{s}\}.$
\end{thm}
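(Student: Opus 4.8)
My plan is to transfer the bulk of the statement into the Calkin algebra $\mathcal{C}=L(X)/K(X)$ through the quotient homomorphism $\pi$. Because $\pi$ is multiplicative, the algebraic hypotheses survive verbatim---$\pi(R)\in\mbox{comm}_{l}(\pi(T))$ with $\pi(T)\in\mbox{comm}(\pi(T)\pi(R))$ (resp. the right-hand variant)---and $R$ being Riesz means exactly that $\pi(R)$ is quasi-nilpotent. Two elementary remarks drive everything: first, the relation $\pi(T)\pi(R)\in\mbox{comm}(\pi(T))$ together with quasi-nilpotency of $\pi(R)$ forces $\pi(T)\pi(R)$ and $\pi(R)\pi(T)$ to be quasi-nilpotent (this is the Calkin reading of \cite[Corollary 3.10]{aznay-ouahab-zariouh7}, the engine of Proposition \ref{propriesz}); second, on any subspace where the leading operator is invertible the weak commutation upgrades to genuine commutation, since $aq\in\mbox{comm}(a)$ with $a$ invertible yields $qa=aq$. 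The backbone I would isolate and prove is the Banach-algebra fact that such a weakly-commuting quasi-nilpotent perturbation leaves both the spectrum and the generalized Drazin invertibility of $\pi(T)$ unchanged, its proof resting on the power identities of \cite[Lemma 3.1]{aznay-ouahab-zariouh7}. Granting it, and recalling from \cite{rwassariesz} that $T$ is generalized Drazin-Riesz invertible precisely when $\pi(T)$ is generalized Drazin invertible in $\mathcal{C}$ (equivalently $0\notin\mbox{acc}\,\sigma_{e}(T)$), the asserted equivalence and the identity $\sigma_{e}(T)=\sigma_{e}(T+R)$ fall out at once.

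For the remaining spectra of (i) I would first upgrade $\sigma_{e}$ to the index-sensitive spectra by the homotopy $t\mapsto T+tR$, $t\in[0,1]$: each $tR$ is Riesz and inherits the same relations, so $\sigma_{e}(T+tR)=\sigma_{e}(T)$ for all $t$; hence the common Fredholm region $\C\setminus\sigma_{e}(T)$ carries the integer $\mbox{ind}(T+tR-\lambda)$, which is continuous in $t$ and so constant. This delivers $\sigma_{w}$, $\sigma_{uw}$, $\sigma_{lw}$, and the Browder spectra $\sigma_{b},\sigma_{ub},\sigma_{lb}$ then follow from the decomposition $\sigma_{b}=\sigma_{e}\cup\mbox{acc}\,\sigma$ (and its one-sided analogues) together with the Riesz-stability of Proposition \ref{propriesz}. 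The delicate entries are $\sigma_{uf}$ and $\sigma_{lf}$ at points $\lambda\in\sigma_{e}(T)$, where the segment may leave the two-sided Fredholm region; there I would invoke Lemma \ref{lemmmf.1}, which in the $\mbox{comm}_{l}$ branch makes $\mathcal{N}(T^{\infty})$ and $\mathcal{H}_{0}(T)$ invariant under $R$ (and dually $\R(T^{\infty}),\mathcal{K}(T)$ in the $\mbox{comm}_{r}$ branch), to match the finiteness of the nullity (resp. the defect) and the closedness of the range across the perturbation. This semi-Fredholm step is where I expect the main obstacle to lie, precisely because the weak-commutation relations are destroyed by passing from $T$ to $T-\lambda$, so a pointwise transfer argument is unavailable and one must exploit the invariant subspaces directly.

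Parts (ii) and (iii) I would treat by the same philosophy with the sharper inputs. For (ii), with $R$ nilpotent, the $B$-versions reduce through the reduced operators $T_{[n]}$ to a Fredholm/Drazin perturbation by a nilpotent element, which is exactly Theorem \ref{thm.drn} and its semi-Fredholm analogue; the exclusion of $0$ in $\sigma_{+}(T)\setminus\{0\}$ reflects that only away from $0$ do the $B$-index normalisations transfer verbatim, while on a Hilbert space the finer $B$-Fredholm theory removes the exception and yields $\sigma_{+}(T)=\sigma_{+}(T+R)$. For (iii), with $R$ quasi-nilpotent, the accumulation equalities $\mbox{acc}\,\sigma_{-}(T)\setminus\{0\}=\mbox{acc}\,\sigma_{-}(T+R)\setminus\{0\}$ come from the Browder-spectrum equalities of (i) via $\sigma_{b}=\sigma_{e}\cup\mbox{acc}\,\sigma$ (so that, off $\sigma_{e}$, accumulation of the spectrum is pinned), combined with the Riesz mechanism to control accumulation of the discrete spectrum toward $\sigma_{e}$; the generalized Drazin-Riesz invertibility of $T$ is what normalises the situation near $0$, explaining the removal of that point. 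Finally, the last clause---$T$ generalized Drazin invertible---is the direct $L(X)$-incarnation of the backbone Banach-algebra fact of the first paragraph (now Koliha's criterion $0\notin\mbox{acc}\,\sigma$ is available on the nose), giving the full equality $\sigma(T)=\sigma(T+R)$ and its approximate and surjectivity analogues.
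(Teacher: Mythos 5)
Your plan collapses at its pivot: the claim, attributed to \cite{rwassariesz}, that $T$ is generalized Drazin--Riesz invertible precisely when $\pi(T)$ is generalized Drazin invertible in the Calkin algebra (equivalently $0\notin\mbox{acc}\,\sigma_{e}(T)$) is false. Take the unilateral shift $S$: its essential spectrum is the unit circle, so $\pi(S)$ is actually \emph{invertible} in the Calkin algebra; yet $S$ is not generalized Drazin--Riesz invertible, because a decomposition $S=S_{M}\oplus S_{N}$ with $S_{M}$ invertible and $S_{N}$ Riesz would force $\mbox{ind}(S-\lambda)=\mbox{ind}(S_{M}-\lambda)+\mbox{ind}(S_{N}-\lambda)=0$ for all small $\lambda\neq 0$, whereas $\mbox{ind}(S-\lambda)=-1$ on the open unit disk. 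The correct characterization involves the Weyl/Browder spectrum, not $\sigma_{e}$: generalized Drazin--Riesz invertibility retains exactly the index data that the Calkin algebra forgets. Hence neither the equivalence ``$T$ gDR invertible $\iff$ $T+R$ gDR invertible'' nor $\sigma_{e}(T)=\sigma_{e}(T+R)$ ``falls out at once'' from your backbone. Worse, the backbone itself---spectral permanence under a weakly commuting quasi-nilpotent perturbation of an \emph{arbitrary} element---is, in the generality you state it, precisely the open question the paper poses immediately after this theorem; it is not a consequence of the power identities of \cite[Lemma 3.1]{aznay-ouahab-zariouh7}. It can be proved when the unperturbed element is generalized Drazin invertible, but that requires showing that the spectral idempotent commutes with the perturbing element, i.e.\ an algebraic analogue of Corollary \ref{corired} built on Proposition \ref{proppseudoring}; you assert it rather than prove it.

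The second gap you concede yourself: for $\sigma_{uf}$ and $\sigma_{lf}$ (and equally for $\sigma_{uw},\sigma_{lw},\sigma_{ub},\sigma_{lb}$ at points lying inside $\sigma_{e}(T)$) the Calkin/homotopy mechanism says nothing, since the segment $T+tR-\lambda$ need not stay semi-Fredholm, and ``invoking Lemma \ref{lemmmf.1}'' is a hope, not an argument---you yourself locate the main obstacle there. The paper's proof avoids every one of these issues by working in $L(X)$ with a single decomposition: since $T$ is gDR invertible one may choose $(M,N)\in\mbox{IRed}(T)$ with $T_{N}$ Riesz; Corollary \ref{corired} shows this same pair reduces $R$; on $M$ the invertibility of $T_{M}$ upgrades the weak commutation to genuine commutation, so $(T+R)_{M}$ is Browder, while on $N$ Proposition \ref{propriesz} makes $(T+R)_{N}$ Riesz; then \cite[Theorem 2.3]{rwassariesz} gives gDR invertibility of $T+R$, every listed spectrum splits as $\sigma_{*}(T)=\sigma_{*}(T_{M})\cup\sigma_{*}(T_{N})$ with $\sigma_{*}(T_{N})\subset\{0\}$, and the point $0$ is handled separately by Kato-type decomposition results. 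Any repair of your argument has to be rebuilt on such a decomposition (or prove the commutation of $R$ with the spectral projection directly); the purely Calkin-algebra route cannot see the index, nullity and closed-range data that parts (i)--(iii) assert are preserved.
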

\begin{proof} (i) If $T$ is generalized Drazin-Riesz invertible, then there exists $(M,N)\in  \mbox{IRed}(T)$ such that  $T_{N}$ is Riesz. Suppose that $R\in\mbox{comm}_{l}(T)$ and $T \in \mbox{comm}(TR)$ [the case $R\in\mbox{comm}_{r}(T)$ and $T \in \mbox{comm}(RT)$ goes similarly]. From  Corollary \ref{corired} we have   $(M,N)\in \mbox{Red}(R).$  Thus  $T_{M}R_{M}=R_{M}T_{M},$    $R_{N}\in\mbox{comm}_{l}(T_{N})$ and $T_{N} \in \mbox{comm}(T_{N}R_{N}).$   Hence $(T+R)_{M}$ is Browder, and by  Proposition \ref{propriesz},  $(T+R)_{N}$ is Riesz.  From \cite[Theorem 2.3]{rwassariesz}, we deduce  that $T+R$ is generalized Drazin-Riesz invertible.  On the other hand, as $\sigma_{*}(T_{N})\subset\sigma_{b}(T_{N})\subset\{0\}$ then  $\sigma_{*}(T)\setminus\{0\}=(\sigma_{*}(T_{M})\cup\sigma_{*}(T_{N}))\setminus\{0\}=\sigma_{*}(T_{M})\setminus\{0\}=\sigma_{*}((T+R)_{M})\setminus\{0\}=\sigma_{*}(T+R)\setminus\{0\}.$    If $0\notin \sigma_{*}(T),$ then $T$ is semi-Fredholm. From  \cite[Corollary 3.7]{aznay-ouahab-zariouh5},  there exists $(M^{'},N^{'})\in\mbox{Red}(T)$ such that $T_{M^{'}}$ is semi-regular, $T_{N^{'}}$ is nilpotent and $\mbox{dim}\,N^{'}<\infty.$  This entails by \cite[Proposition 2.10]{aznay-ouahab-zariouh6} that $T_{M^{'}}$ is invertible. Hence $T$ is Browder,    $(M^{'},N^{'})=(\R(T^{\infty}),\NN(T^{\infty}))$  and  $0\notin \sigma_{*}(T+R).$   Thus  $\sigma_{*}(T)=\sigma_{*}(T+R).$
\\
(ii) Assume  that $R\in \mbox{Nil}(L(X)).$ We have  $\sigma_{+}(T)\setminus\{0\}=(\sigma_{+}(T_{M})\cup\sigma_{+}(T_{N}))\setminus\{0\}=\sigma_{+}(T_{M})\setminus\{0\}=\sigma_{+}((T+R)_{M})\setminus\{0\}=\sigma_{+}(T+R)\setminus\{0\},$ since $\sigma_{+}(T_{N})\subset\sigma_{b}(T_{N})\subset\{0\}$ and $\sigma_{+}$ is  stable under commuting nilpotent perturbations.  If 0$\notin \sigma_{++}(T),$ then $T$ is B-Fredholm which implies from \cite[Theorem 2.21]{aznay-ouahab-zariouh4} and    \cite[Proposition 2.10]{aznay-ouahab-zariouh6} that $T$ is Drazin invertible. We conclude from Theorem \ref{thm.drn}    that  $\sigma_{++}(T)=\sigma_{++}(T+R).$   If in addition $X$ is a Hilbert space,   using  \cite[Theorem 2.6]{berkani-sarih} and the same argument as above, we deduce that $\sigma_{+}(T)=\sigma_{+}(T+R).$  The point (iii) goes similarly and is left to the reader.
\end{proof}

Let $T\in L(X)$ and let $Q\in L(X)$ be a   quasi-nilpotent operator which commutes with $T.$  It is well known that  $\sigma(T)=\sigma(T+Q).$  The proof of Theorem \ref{propgenriesznilp} suggests the following question. \\
{\bf Question:} This equality
$\sigma(T)=\sigma(T+Q)$ remains true if we only have  $Q\in\mbox{comm}_{l}(T)$ and $T \in \mbox{comm}(TQ)$ [or $Q\in\mbox{comm}_{r}(T)$ and $T \in \mbox{comm}(QT)$]?  Note that  Theorem \ref{propgenriesznilp}  gives a partial  answer to  this  question. \\
\par We give in the next result an  extension of  \cite[Lemma 3.81]{Aiena1}.  Recall that $T\in L(X)$ is said to be  algebraic if there exists a non-null polynomial $P$ such that $P(T)=0.$ 
\begin{cor} If $T\in L(X)$ is algebraic and   $N\in \mbox{Nil}(L(X))$ such that   $N\in\mbox{comm}_{l}(T)$ and $T \in \mbox{comm}(TN)$ {\rm[}or $N\in\mbox{comm}_{r}(T)$ and $T \in \mbox{comm}(NT)${\rm]},  then $T+N$ is algebraic.
\end{cor}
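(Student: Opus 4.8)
The plan is to reduce the statement to the commutative case \cite[Lemma 3.81]{Aiena1} by working modulo a square-zero ideal. Throughout, set $C = TN - NT$ and let $\mathcal{B}$ denote the unital subalgebra of $L(X)$ generated by $T$ and $N$.

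First I would unpack the hypotheses. In the case $N \in \mbox{comm}_{l}(T)$ and $T \in \mbox{comm}(TN)$, the relation $TN \in \mbox{comm}(T)$ reads $T^{2}N = TNT$, that is $TC = 0$, while $NT \in \mbox{comm}(N)$ reads $N^{2}T = NTN$, that is $NC = 0$ (the extra condition $T \in \mbox{comm}(TN)$ is again $T^{2}N = TNT$, hence automatic here). From $TC = NC = 0$ one gets $C^{2} = (TN - NT)C = T(NC) - N(TC) = 0$ and, more generally, $wC = 0$ for every nonempty word $w$ in $T$ and $N$. In the alternative case $N \in \mbox{comm}_{r}(T)$, $T \in \mbox{comm}(NT)$ the same computation yields the symmetric relations $CT = CN = 0$ and $C^{2} = 0$; the two cases are interchanged by passing to the opposite algebra, so I will only treat the first.

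Next, let $\mathcal{J}$ be the two-sided ideal of $\mathcal{B}$ generated by $C$. Writing $b \in \mathcal{B}$ as a scalar plus a combination of nonempty words, $wC = 0$ gives $bC = \gamma(b)\,C$ for a scalar $\gamma(b)$ (uniquely determined once $C \neq 0$; if $C = 0$ then $T$ and $N$ commute and we are already in the commutative case). Hence $\mathcal{J} = C\mathcal{B}$, and $(Cb)(Cb') = C(bC)b' = \gamma(b)\,C^{2}b' = 0$, so $\mathcal{J}^{2} = 0$. Since $TN - NT = C \in \mathcal{J}$, the images $\bar{T}, \bar{N}$ of $T, N$ in the quotient $\mathcal{B}/\mathcal{J}$ commute, so $\mathcal{B}/\mathcal{J}$ is commutative; moreover $\bar{N}$ is nilpotent (say $\bar{N}^{\nu} = 0$, with $\nu$ the nilpotency index of $N$) and $\bar{T}$ is algebraic, since $T$ is algebraic with some annihilating polynomial $p$ and $p(\bar{T}) = \overline{p(T)} = 0$.

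Finally I would invoke the commutative case. As $\bar{T}\bar{N} = \bar{N}\bar{T}$, the finite Taylor expansion $p(\bar{T} + \bar{N}) = \sum_{k \geq 1} \frac{p^{(k)}(\bar{T})}{k!}\,\bar{N}^{k}$ (the term $k=0$ is $p(\bar{T}) = 0$) shows $p(\bar{T} + \bar{N}) \in (\bar{N})$, whence $p(\bar{T} + \bar{N})^{\nu} = 0$. Lifting, $p(T + N)^{\nu} \in \mathcal{J}$, and since $\mathcal{J}^{2} = 0$ this forces $p(T+N)^{2\nu} = 0$; as $p \neq 0$, the operator $T+N$ is algebraic. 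The main obstacle is the passage through $\mathcal{B}/\mathcal{J}$: one cannot simply translate by the eigenvalues of $T$ and apply Theorem \ref{thm.drn} pointwise, because the hypotheses $N \in \mbox{comm}_{l}(T)$, $T \in \mbox{comm}(TN)$ are \emph{not} preserved under the shift $T \mapsto T - \lambda I$ for $\lambda \neq 0$, the defect being exactly $\lambda C$. The commutator identities $TC = NC = 0$ and $C^{2} = 0$ are precisely what make the ideal generated by $C$ square-zero, and verifying this collapse, together with the well-definedness of $\gamma$, is the crux of the argument.
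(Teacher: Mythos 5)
Your proof is correct, and it takes a genuinely different route from the paper's. The paper disposes of this corollary in one line: $T$ is algebraic if and only if $\sigma_{d}(T)=\emptyset$, and since an algebraic operator has empty Drazin spectrum it is in particular generalized Drazin-Riesz invertible, so Theorem \ref{propgenriesznilp}(ii) (with $\sigma_{++}=\sigma_{d}$) gives $\sigma_{d}(T+N)=\sigma_{d}(T)=\emptyset$, i.e.\ $T+N$ is algebraic. That argument is short only because it sits on top of the spectral machinery of Section 3 (the $\mbox{IRed}(T)$ decomposition, stability of the Drazin spectrum under this perturbation, and the cited results on generalized Drazin-Riesz invertibility). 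You replace all of this with pure ring theory: the hypotheses collapse to $TC=NC=0$ for the commutator $C=TN-NT$, which forces the ideal $\mathcal{J}=C\mathcal{B}$ to be square-zero, and the classical commuting argument (Taylor expansion, as in \cite[Lemma 3.81]{Aiena1}) then runs in the commutative quotient $\mathcal{B}/\mathcal{J}$ and lifts back because $\mathcal{J}^{2}=0$. Each approach buys something: yours is self-contained and elementary, uses no Banach-space or spectral-theoretic input (so it proves the statement in any unital complex algebra, a strictly more general result), and is quantitative, exhibiting the explicit annihilating polynomial $p^{2\nu}$ of degree $2\nu\deg p$, where $p(T)=0$ and $N^{\nu}=0$; the paper's route costs nothing inside its own framework and delivers the accompanying spectral equalities of Theorem \ref{propgenriesznilp}, which your argument does not produce. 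Two details of yours that do check out and are worth recording: the redundancy of $T\in\mbox{comm}(TN)$ given $N\in\mbox{comm}_{l}(T)$ is real under the paper's definitions, and well-definedness of $\gamma$ is not actually needed, since only the containment $bC\in\C C$ is ever used (which holds even when $C=0$). Finally, your closing remark identifies the right obstruction: the hypotheses are not invariant under $T\mapsto T-\lambda I$, so one cannot iterate Theorem \ref{thm.drn} pointwise over the spectrum; the paper circumvents this through the $\mbox{IRed}$ decomposition, and you circumvent it through the square-zero ideal.
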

\begin{proof} The proof  follows from   Theorem \ref{propgenriesznilp} and the fact that   $T$ is algebraic if and only if 
it has empty Drazin spectrum.
\end{proof}
\begin{thm}\label{thm.srn} Let $R \in L(X)$ and   $T \in \mbox{comm}(TR)\cap \mbox{comm}(RT).$  The following assertions hold:\\
(i) If    $T$ is essentially semi-regular and  $R$ is  Riesz,     then $T+R$ is  essentially semi-regular.\\
(ii) If    $T$ is semi-regular and  $R=Q$ is  quasi-nilpotent, then  $T+Q$ is semi-regular.
\end{thm}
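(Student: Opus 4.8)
The plan is to reduce the hypothesis $T\in\mbox{comm}(TR)\cap\mbox{comm}(RT)$ — which unwinds to the single chain of identities $T^{2}R=TRT=RT^{2}$ — to \emph{genuine} commutativity on the two natural $R$-invariant subspaces attached to $T$, and then to invoke the classical commuting perturbation theory of surjective and of bounded below operators. First I would record that $RT\in\mbox{comm}(T)$ and $TR\in\mbox{comm}(T)$, so by Lemma \ref{lemmmf.1} both $\R(T^{\infty})\supseteq\mathcal{K}(T)$ and $\NN(T^{\infty})$ are $R$-invariant (they are $T$-invariant for free). Since the restriction and the induced quotient of a quasi-nilpotent operator to/by an invariant subspace remain quasi-nilpotent, these invariances are exactly what allow the perturbation to be localised.

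The key step, mirroring the proof of Theorem \ref{thm.drn} but with one-sided invertibility in place of invertibility, is a cancellation that upgrades the weak relation to commutativity. On $\mathcal{K}(T)$ the restriction $\check T:=T_{\mathcal{K}(T)}$ is onto (the analytic core is the largest subspace on which $T$ is surjective), so from the inherited identity $(\check T\check R)\check T=(\check R\check T)\check T$ I cancel $\check T$ on the right and obtain $\check T\check R=\check R\check T$. Dually, on $X/\NN(T^{\infty})$ the induced map $\hat T$ is injective (because $T^{-1}\NN(T^{\infty})=\NN(T^{\infty})$), so from $\hat T(\hat T\hat R)=\hat T(\hat R\hat T)$ I cancel $\hat T$ on the left and obtain $\hat T\hat R=\hat R\hat T$. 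Thus the perturbation genuinely commutes with $T$ on each piece, and the problem is reduced to the commutative situation.

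For (ii), when $T$ is semi-regular one has $\mathcal{K}(T)=\R(T^{\infty})$ with $\check T$ surjective there, and $\hat T$ bounded below on $X/\NN(T^{\infty})$. Adding the commuting quasi-nilpotent operators $\check Q$ and $\hat Q$ preserves surjectivity and boundedness below respectively (the surjectivity spectrum and the approximate point spectrum are stable under commuting quasi-nilpotent perturbations), so $\check T+\check Q$ stays onto and $\hat T+\hat Q$ stays bounded below. For (i) I would run the same argument in the Calkin algebra, where $\pi(R)$ is quasi-nilpotent and essential semi-regularity of $T$ means the induced maps are surjective/bounded below modulo a finite-dimensional space; the one-sided cancellation and the commuting-perturbation steps are unchanged, and Proposition \ref{propriesz} keeps the residual Riesz/finite-rank contributions Riesz.

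The part I expect to be the real obstacle is the reassembly: converting ``$\check T+\check Q$ surjective on $\R(T^{\infty})$ and $\hat T+\hat Q$ bounded below on $X/\NN(T^{\infty})$'' back into the assertion that $T+Q$ itself is semi-regular (and, in (i), essentially semi-regular). This requires the filtration $\NN(T^{\infty})\subseteq\R(T^{\infty})\subseteq X$ — on whose middle layer $\R(T^{\infty})/\NN(T^{\infty})$ the operator $T$ acts invertibly — to remain intact after perturbation, that is $\R((T+Q)^{\infty})=\R(T^{\infty})$, $\NN((T+Q)^{\infty})=\NN(T^{\infty})$, together with closedness of $\R(T+Q)$. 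I would establish this exactly in the spirit of the final assertion of Theorem \ref{thm.drn}, treating the three layers separately (invertible core, bounded below head, locally quasi-nilpotent tail) and recombining; the finite-dimensional bookkeeping in case (i) is where the argument is most delicate.
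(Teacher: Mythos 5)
Your cancellation idea is exactly the paper's: where the restriction of $T$ is onto you cancel $T$ on the right in $(TR)T=(RT)T$, where the induced operator is injective you cancel on the left, and Lemma \ref{lemmmf.1} supplies the $R$-invariance of the subspaces involved. But your proposal has a genuine gap precisely at the point you yourself flag as ``the real obstacle'', the reassembly, and the missing ingredient is a known characterization rather than a new computation. The paper uses \emph{one} subspace, $M=\R(T^{\infty})$, which is closed because $T$ is (essentially) semi-regular (\cite[Proposition 13]{kordula-muller}) and $R$-invariant by Lemma \ref{lemmmf.1}, and then invokes the Kordula--M\"uller characterizations: $T+R$ is semi-regular as soon as \emph{some} closed invariant subspace $M$ exists with $(T+R)_M$ onto and $\overline{T+R}$ bounded below on $X/M$ (\cite[Lemma 1]{muller}), with the analogous statement for essential semi-regularity (\cite[Theorem 14]{kordula-muller}). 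No identification of $\R((T+Q)^{\infty})$ with $\R(T^{\infty})$, nor of hyperkernels, is ever needed; one only exhibits the witness $M$. Your substitute plan --- proving $\R((T+Q)^{\infty})=\R(T^{\infty})$, $\NN((T+Q)^{\infty})=\NN(T^{\infty})$ and closedness of $\R(T+Q)$ ``in the spirit of Theorem \ref{thm.drn}'' --- is not carried out and does not transfer: in Theorem \ref{thm.drn} the two subspaces form a direct sum $X=M\oplus N$, so recombination is trivial, whereas here $\NN(T^{\infty})\subseteq\R(T^{\infty})$ is a filtration with no complement. There is also a well-posedness flaw in your setup: $\NN(T^{\infty})$ is in general not closed, so $X/\NN(T^{\infty})$ is not a Banach space and ``$\hat T$ bounded below'' is not meaningful there; the paper avoids this entirely by quotienting by the closed subspace $\R(T^{\infty})$ itself.

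Part (i) has a second gap. ``Running the same argument in the Calkin algebra'' does not work as stated: essential semi-regularity of $T$ is not reduced (by anything in the paper or in your sketch) to a property of the coset $\pi(T)$, and, more fatally, your cancellation steps rest on surjectivity or injectivity of concrete operators, notions that have no counterpart for an element of an abstract quotient algebra. In the paper's proof of (i), genuine commutativity can be recovered only on $M$ (where $T_M$ is onto, via \cite[Corollary 4.2]{aznay-ouahab-zariouh7}), giving that $(T+R)_M$ is lower semi-Browder; on the quotient $X/M$ the operator $\overline{T}$ is merely upper semi-Browder, so no cancellation is possible, and the paper must call on a perturbation theorem valid under the weak commutation hypotheses themselves (\cite[Proposition 4.18]{aznay-ouahab-zariouh7}, Riesz perturbation of upper semi-Browder operators) to get that $\overline{T}+\overline{R}$ is upper semi-Fredholm, before concluding with \cite[Theorem 14]{kordula-muller}. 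Your proposal offers no substitute for that quotient step, so case (i) remains unproven even granting your reassembly.
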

\begin{proof} Let $M= \R(T^{\infty}).$\\
(i)   Assume that  $T$ is essentially semi-regular.  From  \cite[Proposition 13]{kordula-muller} and Lemma \ref{lemmmf.1}, we conclude  that $M$ is closed  $R$-invariant.    Consider the operators $\overline{T},\overline{R}\in L(X/M)$ induced      by $T$ and $R,$ respectively.  As $R$ is Riesz  then  \cite[Lemma 15]{kordula-muller} implies  that $R_{M}$ and $\overline{R}$ are  Riesz, and   since $T_{M}$ is onto,   it follows  from  \cite[Corollary 4.2]{aznay-ouahab-zariouh7}  that     $T_{M}\in \mbox{comm}(R_{M}).$  Hence    $(T+R)_{M}$ is lower semi-Browder. Since   $\overline{T}$ is upper semi-Browder, from  \cite[Proposition  4.18]{aznay-ouahab-zariouh7} we conclude  that  $\overline{T+R}=\overline{T}+\overline{R}$ is upper semi-Fredholm. We deduce then from  \cite[Theorem 14]{kordula-muller}  that $T+R$ is essentially semi-regular.\\
(ii)  If   $T$ is semi-regular then $M$ is closed    $Q$-invariant   and $T_{M}$ is onto.   Consider the operators  $\overline{T},\overline{Q}\in L(X/M)$   induced      by $T$ and $Q,$ respectively. As $Q$ is quasi-nilpotent then $Q_{M}$ and $\overline{Q}$ are  quasi-nilpotent. Moreover,  by  Lemma \cite[Lemma 1]{muller}    we have $T_{M}$ is onto and $\overline{T}$ is bounded below. As $T \in \mbox{comm}(TQ)\cap\mbox{comm}(QT)$    then     $\overline{Q}\in\mbox{comm}(\overline{T})$ and  $Q_{M}\in\mbox{comm}(T_{M}).$  Hence    $(T+Q)_{M}$ is onto  and $\overline{T}+\overline{Q}$ is bounded below. Again by  \cite[Lemma 1]{muller}, we deduce that $T+Q$ is semi-regular. 
\end{proof}

\section{  Perturbations by finite rank operators}
We begain this part by the  next lemma which   gives an extension of \cite[Lemma 2.1]{kaashoek-lay}  proved in the case of    commuting operators.
\begin{lem}\label{lem2f.1}  Let $S,T\in L(X)$ such that $S \in \mbox{comm}_{w}(T).$     Then for every integers $m\geq 1$ and $n \geq 3,$ we have   \\
(i)   $\mbox{max}\left\{\displaystyle\mbox{dim}\frac{\mathcal{N}(T^{n})}{\mathcal{N}[(T+S)^{n+m-1}]\cap\mathcal{N}(T^{n})},\displaystyle\mbox{dim}\frac{\mathcal{N}[(T+S)^{n}]}{\mathcal{N}(T^{n+m-1})\cap\mathcal{N}[(T+S)^{n}]}\right\}
\leq\mbox{dim}\,\mathcal{R}(S^{m}).$\\
(ii)   $\mbox{max}\left\{\displaystyle\mbox{dim}\frac{\R(T^{n+m-1})}{\R[(T+S)^{n}]\cap\R(T^{n+m-1})},\displaystyle\mbox{dim}\frac{\R[(T+S)^{n+m-1}]}{\R(T^{n})\cap\R[(T+S)^{n+m-1}]}\right\} \leq\mbox{dim}\,\mathcal{R}(S^{m}).$
\end{lem}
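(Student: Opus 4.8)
The plan is to reduce the estimate to the commuting Kaashoek--Lay computation by first proving that, although $T$ and $S$ need not commute, they commute \emph{up to length three}. From $S\in\mbox{comm}_{w}(T)$ the definition yields at once the four identities $TST=T^{2}S$, $STS=S^{2}T$, $TS^{2}=STS$ and $ST^{2}=TST$, which combine into $T^{2}S=TST=ST^{2}$ and $S^{2}T=STS=TS^{2}$. Viewed as statements about words in the two letters $T$ and $S$, these say exactly that any two words of length three with the same number of $T$'s and the same number of $S$'s represent the same operator. The first and decisive step is to promote this to all lengths: for every $k\geq 3$, any two words of length $k$ in $T,S$ with equal letter counts coincide, and hence each equals its sorted form $T^{a}S^{b}$, where $a$ and $b$ count the $T$'s and $S$'s. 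I would prove this by realizing an arbitrary transposition of two neighbouring letters inside a length-three window: since $k\geq 3$, any adjacent pair of positions lies in such a window, reordering the window preserves the operator by the length-three identities, and bubbling all the $T$'s to the left produces the sorted form. I expect this combinatorial step to be the main obstacle; it is also precisely where the hypothesis $n\geq 3$ enters, since every word that occurs has length $n$ or $n+m-1$, both $\geq 3$.

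With this in hand the binomial formula holds in the shape $(T+S)^{N}=\sum_{a+b=N}\binom{N}{a}T^{a}S^{b}$ for $N\geq 3$, together with the rearrangements $T^{a}S^{b}=S^{b}T^{a}$ and $T^{a}S^{b}=S^{m}T^{a}S^{b-m}$ valid whenever $a+b\geq 3$ and $b\geq m$. I would also check that the hypotheses pass to the shifted pair, namely that $S\in\mbox{comm}_{w}(T)$ forces $-S\in\mbox{comm}_{w}(T+S)$ (equivalently $S\in\mbox{comm}_{w}(T+S)$); this is again a purely length-three verification built from the content-invariance above, so the combinatorial lemma applies verbatim to $(T+S,-S)$.

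The second step is two inclusion lemmas. For kernels: if $T^{n}x=0$ and $N=n+m-1$, then expanding $(T+S)^{N}x$ binomially, every term with $a\geq n$ equals $S^{N-a}T^{a}x=0$, while every term with $a\leq n-1$ satisfies $N-a\geq m$ and equals $S^{m}(T^{a}S^{N-a-m}x)\in\R(S^{m})$; hence $(T+S)^{N}x\in\R(S^{m})$. For ranges: writing $T=(T+S)+(-S)$ and expanding $T^{N}$ in the pair $(T+S,-S)$, the terms with $a\geq n$ lie in $\R((T+S)^{n})$ and those with $a\leq n-1$ lie in $\R(S^{m})$, so $\R(T^{N})\subseteq\R((T+S)^{n})+\R(S^{m})$; the symmetric expansion in the pair $(T,S)$ gives $\R((T+S)^{N})\subseteq\R(T^{n})+\R(S^{m})$.

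Finally I assemble the four estimates. For the first kernel quotient the linear map $x\mapsto(T+S)^{N}x$ carries $\mathcal{N}(T^{n})$ into $\R(S^{m})$ with kernel $\mathcal{N}((T+S)^{N})\cap\mathcal{N}(T^{n})$, so that quotient embeds into $\R(S^{m})$; the second kernel quotient is the same assertion applied to $(T+S,-S)$ through $x\mapsto T^{N}x$ on $\mathcal{N}((T+S)^{n})$. For the range quotients I would use the modular identity $\mbox{dim}\,Y/(Y\cap Z)=\mbox{dim}\,(Y+Z)/Z$ together with the two inclusions, e.g. $\mbox{dim}\,\R(T^{N})/(\R((T+S)^{n})\cap\R(T^{N}))=\mbox{dim}\,(\R(T^{N})+\R((T+S)^{n}))/\R((T+S)^{n})\leq\mbox{dim}\,\R(S^{m})$, and symmetrically for the other term. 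Taking the maximum of the two quotients in each of (i) and (ii) gives the stated bound. Beyond the length-three combinatorics of the first step, everything is a faithful transcription of the commuting argument.
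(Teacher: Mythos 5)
Your proposal is correct and follows essentially the same route as the paper: the content-invariance of words of length $\geq 3$ that you establish via adjacent transpositions is precisely what the paper imports from \cite[Corollary 3.6]{aznay-ouahab-zariouh7} (the expansions $(T+S)^{n+m-1}=T^{n}A+S^{m}B$ and $(T+S)^{n+m-1}x=S^{m}Ax$ for $x\in\mathcal{N}(T^{n})$), and the symmetry $S\in\mbox{comm}_{w}(T)\Leftrightarrow(-S)\in\mbox{comm}_{w}(T+S)$ is invoked identically. The only differences are cosmetic: you count dimensions with the first and second isomorphism theorems, where the paper argues with algebraic complements and a linear-independence contradiction.
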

\begin{proof} (i) Since  $S \in \mbox{comm}_{w}(T),$  from  \cite[Corollary  3.6]{aznay-ouahab-zariouh7} we have if  every $x\in \mathcal{N}(T^{n}),$ then   $(T+S)^{n+m-1}x=S^{m}Ax,$ where $A=\displaystyle\sum_{i=0}^{n-1}C_{n+m-1}^{i+m}S^{i}T^{n-i-1}.$   Thus $(T+S)^{n+m-1}(\mathcal{N}(T^{n}))\subset \R(S^{m}).$   Let $M$ be a subspace  such that  $\mathcal{N}(T^{n})=\left(\mathcal{N}[(T+S)^{n+m-1}]\cap\mathcal{N}(T^{n})\right)\oplus M.$  As $T^{n}(T+S)^{n+m-1}=(T+S)^{n+m-1}T^{n}$ then $(T+S)^{n+m-1}(\mathcal{N}(T^{n}))\subset \mathcal{N}(T^{n}).$ And since $M\cap\mathcal{N}[(T+S)^{n+m-1}]=\{0\}$ it then follows that  $\mbox{dim}\,M\leq\mbox{dim}\,\R(S^{m}).$
Since  $S \in \mbox{comm}_{w}(T)$ if and only if $(-S)\in \mbox{comm}_{w}(T+S),$ the proof is complete.\\
(ii)   Let $M$ be a subspace  such that   $\R[(T+S)^{n+m-1}]=\left(\R(T^{n})\cap\R[(T+S)^{n+m-1}]\right)\oplus M$ and  let $(e_{i}:=(T+S)^{n+m-1}v_{i})_{i=1,\dots,k}$ be a  linearly independent family  of  $M.$ From \cite[Corollary  3.6]{aznay-ouahab-zariouh7}, we deduce that
\begin{align*}
(T+S)^{n+m-1}&=\sum_{i=0}^{n+m-1}C_{n+m-1}^{i}T^{n+m-1-i}S^{i}\\
&=\sum_{i=0}^{m-1}C_{n+m-1}^{i}T^{n+m-1-i}S^{i}+\sum_{i=m}^{n+m-1}C_{n+m-1}^{i}T^{n+m-1-i}S^{i}\\
&=T^{n}A+S^{m}B,
\end{align*}
  where $A=\displaystyle\sum_{i=0}^{m-1}C_{n+m-1}^{i}T^{m-1-i}S^{i}$ and $B=\displaystyle\sum_{i=m}^{n+m-1}C_{n+m-1}^{i}T^{n+m-1-i}S^{i-m}.$  If $k>\mbox{dim}\,\mathcal{R}(S^{m}),$ then there exist  $\lambda_{1},\dots,\lambda_{k}$ not all zero such that  $\displaystyle\sum_{i=1}^{k}\lambda_{i}S^{m}Bv_{i}=0.$ Hence  $\displaystyle\sum_{i=1}^{k}\lambda_{i}(T+S)^{n+m-1}v_{i}=\displaystyle\sum_{i=1}^{k}\lambda_{i}T^{n}Av_{i}$ and  thus $\displaystyle\sum_{i=1}^{k}\lambda_{i}e_{i}\in  \R(T^{n})\cap M=\{0\}.$ But this  is a contradiction. Therefore $k\leq \mbox{dim}\,\mathcal{R}(S^{m})$ and then $\mbox{dim}\, M\leq \mbox{dim}\,\mathcal{R}(S^{m}).$ The proof is complete.
\end{proof}
Denote by $\mathcal{F}_{0}(X)$ the class of    power   finite rank operators acting on   $X.$  The next theorem extends  \cite[Theorem 2.2]{kaashoek-lay},  \cite[Proposition 3.1]{oudghiri}  and a special case of the direct implication of \cite[Theorem 3.1]{belhadj}.
\begin{thm}\label{thm3f.1} Let $T \in L(X)$ and $F \in \mathcal{F}_{0}(X)$ such that $F\in\mbox{comm}_{w}(T).$   The following equivalences hold:\\
(i) $q(T)<\infty$ if and only if $q(T+F)<\infty.$ \\
(ii) $p(T)<\infty$ if and only if $p(T+F)<\infty.$\\
(iii) $q_{e}(T)<\infty$ if and only if $q_{e}(T+F)<\infty.$\\
(iv) $p_{e}(T)<\infty$ if and only if $p_{e}(T+F)<\infty.$\\
(v) $m_{T}<\infty$ if and only if $m_{T+F}<\infty.$
\end{thm}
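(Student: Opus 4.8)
The plan is to fix once and for all an integer $m\ge 1$ with $\dim\R(F^{m})=:d<\infty$ (possible since $F\in\mathcal F_{0}(X)$), and to read every assertion through Lemma~\ref{lem2f.1} applied with $S=F$. Two reductions come first. Because $F\in\mbox{comm}_{w}(T)$ is equivalent to $-F\in\mbox{comm}_{w}(T+F)$ (as used in the proof of Lemma~\ref{lem2f.1}), $-F$ is again power finite rank, and $(T+F)+(-F)=T$, it suffices in each of (i)--(v) to prove the \emph{forward} implication; the converse then follows by applying the same statement to the pair $(T+F,-F)$. Secondly, recalling $\alpha(T_{[n]})=\dim\mathcal N(T^{n+1})/\mathcal N(T^{n})$ and $\beta(T_{[n]})=\dim\R(T^{n})/\R(T^{n+1})$, and that both sequences are non-increasing in $n$, the defining infima satisfy $m_{T}=\min\{p_{e}(T),q_{e}(T)\}$; hence (v) is an immediate consequence of (iii) and (iv), and I would dispose of it last in one line.

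For (ii) I would argue by a fixed-space stabilisation. Assuming $p(T)<\infty$ the kernels stabilise: $\mathcal N(T^{n})=\mathcal N(T^{p(T)})=:K$ for all large $n$. Substituting $\mathcal N(T^{n})=\mathcal N(T^{n+m-1})=K$ into the two bounds of Lemma~\ref{lem2f.1}(i) gives, for all large $n$,
\[
\dim\frac{K}{K\cap\mathcal N[(T+F)^{n}]}\le d
\qquad\text{and}\qquad
\dim\frac{\mathcal N[(T+F)^{n}]}{K\cap\mathcal N[(T+F)^{n}]}\le d .
\]
The subspaces $L_{n}:=K\cap\mathcal N[(T+F)^{n}]$ increase inside the \emph{fixed} space $K$ with $\dim K/L_{n}\le d$, and a bounded non-increasing codimension forces $L_{n}$ to stabilise, say $L_{n}=L$. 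The second bound then says $\mathcal N[(T+F)^{n}]\supseteq L$ with $\dim\mathcal N[(T+F)^{n}]/L\le d$, so the increasing chain $\mathcal N[(T+F)^{n}]$ has bounded overgrowth over the fixed $L$ and must stabilise as well; thus $p(T+F)<\infty$. Part (i) is the verbatim dual: assuming $q(T)<\infty$ one has $\R(T^{n})=\R(T^{q(T)})=:R$ eventually, and Lemma~\ref{lem2f.1}(ii) with $\R(T^{n})=R$ shows first that $R\cap\R[(T+F)^{n}]$ stabilises inside the fixed $R$, and then that the \emph{decreasing} chain $\R[(T+F)^{n}]$ has bounded codimension over that stable intersection, hence stabilises, giving $q(T+F)<\infty$.

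Parts (iii) and (iv) are where the real work lies, and they are the main obstacle. Lemma~\ref{lem2f.1} only delivers that the filtrations $\R(T^{n}),\R[(T+F)^{n}]$ (resp. $\mathcal N(T^{n}),\mathcal N[(T+F)^{n}]$) are commensurable up to the fixed defect $d$ \emph{after a shift $m-1$}. By itself this controls the spaces but \emph{not} the successive gaps $\beta[(T+F)_{[n]}]$ (resp. $\alpha[(T+F)_{[n]}]$): a direct dimension count of $N_{n}/N_{n+1}$ against the $M$-filtration always reintroduces the very gap one is trying to bound, so plain commensurability is provably insufficient here. The point is that $F$ itself is not finite rank, only $F^{m}$ is; hence one cannot transport a single step of the $T+F$-filtration, but only a \emph{block} of $m$ steps, where $F^{m}$ finally appears. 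Concretely I would not use the inequalities as a black box but the identity behind them, $(T+F)^{n+m-1}=T^{n}A+F^{m}B$ of \cite[Corollary~3.6]{aznay-ouahab-zariouh7}, which exhibits $(T+F)^{n+m-1}$ as an operator with range in $\R(T^{n})$ modulo the finite rank term $F^{m}B$; combined with the invariance relations $F(\R(T^{j+1}))\subseteq\R(T^{j})$ and $F(\mathcal N(T^{j}))\subseteq\mathcal N(T^{j+1})$ from Lemma~\ref{lemmmf.1}, this should compare a block gap $\dim N_{n}/N_{n+m}$ with the corresponding $T$-block gap up to a bounded error in terms of $d$.

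Thus I would isolate, as the technical heart, a \emph{block transport estimate} bounding $\beta[(T+F)_{[n]}]$ (via such a block) in terms of $\beta(T_{[n']})$ for a nearby $n'$ plus a constant multiple of $d$, and dually for $\alpha$. Granting it, since $\beta(T_{[\cdot]})$ is non-increasing and is eventually finite precisely when $q_{e}(T)<\infty$, the estimate forces $\beta[(T+F)_{[n]}]<\infty$ for large $n$, i.e. $q_{e}(T+F)<\infty$; the same scheme on kernels yields (iv), and then (v) follows from $m_{T}=\min\{p_{e}(T),q_{e}(T)\}$. I expect establishing this per-block estimate—rather than the soft commensurability—to be the decisive and delicate step, and I would prove it as a separate lemma before assembling (iii)--(v).
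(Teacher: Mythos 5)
Your reductions (handling the converses via $-F\in\mbox{comm}_{w}(T+F)$, and deducing (v) from $m_{T}=\min\{p_{e}(T),q_{e}(T)\}$) and your arguments for (i) and (ii) are correct, and they are essentially the paper's: stabilize the $T$-chain at $\R(T^{q(T)})$ (resp.\ $\mathcal{N}(T^{p(T)})$), use Lemma \ref{lem2f.1} to see that the intersections with the $(T+F)$-chain have bounded monotone (co)dimension inside that fixed space and hence stabilize, and then conclude that the $(T+F)$-chain itself stabilizes. The genuine gap is in (iii) and (iv), which you yourself identify as ``the real work'': your proposal does not prove them. The decisive ``block transport estimate'' bounding $\beta[(T+F)_{[n]}]$ in terms of $\beta(T_{[n']})$ plus a multiple of $d$ is only announced (``Granting it\dots'', ``I would prove it as a separate lemma''), never established, so for (iii)--(v) what you have is a plan, not a proof.

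Moreover, the missing step is easier than you fear, and it goes along the very line you gesture at. The paper needs no per-step (or per-block) comparison of gaps: from the identity $(T+F)^{n+m-1}=T^{n}A+F^{m}B$ it extracts only the containment $\R[(T+F)^{n+m-1}]\subseteq\R(T^{n})+\R(F^{m})\subseteq\R(T^{e})+\R(F^{m})$, where $e:=q_{e}(T)$; combined with Lemma \ref{lem2f.1} and $\dim\R(T^{e})/\R(T^{n+m-1})<\infty$, this shows that \emph{every} range $\R[(T+F)^{n+m-1}]$, $n\geq\max\{3,e\}$, has finite codimension in the single fixed space $\R(T^{e})+\R(F^{m})$, whence the consecutive quotients $\R[(T+F)^{n+m-1}]/\R[(T+F)^{n+m}]$ are automatically finite-dimensional, i.e.\ $q_{e}(T+F)<\infty$; (iv) is the dual statement on kernels. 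Incidentally, your side claim that ``plain commensurability is provably insufficient'' is not correct either: using the two inequalities of Lemma \ref{lem2f.1} as a black box at the shifted indices $n$ and $n+m$, together with $\dim\R(T^{n})/\R(T^{n+2m-1})<\infty$ for $n\geq e$, a short tower computation through the subspace $\R(T^{n+2m-1})\cap\R[(T+F)^{n+m}]$ yields $\dim\R[(T+F)^{n+m-1}]/\R[(T+F)^{n+m}]\leq 2\dim\R(F^{m})+\dim\R(T^{n})/\R(T^{n+2m-1})<\infty$, so even the soft route closes the gap --- but in either case the argument has to be written down, and that is precisely what your proposal omits.
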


\begin{proof} Let $m\geq 1$ be an integer such that $\mbox{dim}\,\mathcal{R}(F^{m})<\infty.$\\
(i) Assume that $q:=q(T)<\infty$ and let $n\geq \mbox{max}\{3,q\}.$ Then
$$c_{n}:=\displaystyle\mbox{dim}\frac{\R(T^{n+m-1})}{\R[(T+F)^{n}]\cap\R(T^{n+m-1})}=\displaystyle\mbox{dim}\frac{\R(T^{q})}{\R[(T+F)^{n}]\cap\R(T^{q})},$$
 $$c^{'}_{n}:=\displaystyle\mbox{dim}\frac{\R[(T+F)^{n+m-1}]}{\R(T^{n})\cap\R[(T+F)^{n+m-1}]}=\displaystyle\mbox{dim}\frac{\R[(T+F)^{n+m-1}]}{\R(T^{q})\cap\R[(T+F)^{n+m-1}]}.$$
 From  Lemma \ref{lem2f.1}  we have $\mbox{max}\{c_{n},c^{'}_{n}\}<\infty$ for all $n\geq  \mbox{max}\{3,q\}.$  As  $(c_{n})_{n}$ is increasing then    there exists an integer  $k\geq \mbox{max}\{3,q\}$ such that    $\R[(T+F)^{n}]\cap\R(T^{q})=\R[(T+F)^{k}]\cap\R(T^{q}),$ for every $n \geq k.$ Thus $c^{'}_{n}=\displaystyle\mbox{dim}\frac{\R[(T+F)^{n+m-1}]}{\R(T^{q})\cap\R[(T+F)^{k}]}$ for every $n\geq k.$ Therefore $(c^{'}_{n})_{n\geq k}$ is a decreasing sequence. So there exists $r\geq k$ such that for every $n\geq r$ we have  $c^{'}_{n}=c^{'}_{r}.$   Hence $q(T+F)\leq r+m-1$ and  the converse is obvious.   The point (ii) goes similarly.\\
(iii)  Assume that $e:=q_{e}(T)<\infty.$  By Lemma \ref{lem2f.1} we obtain   $\displaystyle\mbox{dim}\frac{\R(T^{e})}{\R[(T+F)^{n}]\cap\R(T^{n+m-1})}=\mbox{dim}\frac{\R(T^{e})}{\R(T^{n+m-1})}+\displaystyle\mbox{dim}\frac{\R(T^{n+m-1})}{\R[(T+F)^{n}]\cap\R(T^{n+m-1})}<\infty$ for every $n\geq l=\mbox{max}\,\{3,e\}.$ Thus $\displaystyle\mbox{dim}\frac{\R(T^{e})}{\R[(T+F)^{n}]\cap\R(T^{e})}<\infty$ for every $n\geq l.$ On the other hand, from the  proof of     Lemma \ref{lem2f.1},  we have $\R[(T+F)^{n+m-1}]\subset \R(T^{e})+\R(F^{m})$ for every $n\geq l.$  Hence    $$\displaystyle\mbox{dim}\frac{\R(T^{e})+\R(F^{m})}{\R[(T+F)^{n+m-1}]}=\displaystyle\mbox{dim}\frac{\R(T^{e})+\R(F^{m})}{\R[(T+F)^{n+m-1}]\cap\R(T^{e})}-\displaystyle\mbox{dim}\frac{\R[(T+F)^{n+m-1}]}{\R[(T+F)^{n+m-1}]\cap\R(T^{e})}<\infty,$$ and consequently  $\displaystyle\mbox{dim}\frac{\R[(T+F)^{n+m-1}]}{\R[(T+F)^{n+m}]}< \infty$ for every $n\geq l.$ Therefore  $q_{e}(T+F)\leq \mbox{max}\,\{m+2,q_{e}(T)+m-1\}<\infty$ and  the converse is obvious. The point (iv) goes similarly. For the proof of the  point (v), the reader is referred to  \cite{aznay-ouahab-zariouh5} in which we mentioned     that $m_{T}=\mbox{min}\,\{p_{e}(T),q_{e}(T)\}.$ 
\end{proof}

The following theorem extends \cite[Lemma 2.1, Lemma 2.2]{zeng}.
\begin{thm}\label{cor2f.1} Let $T \in L(X)$ and  $F \in \mathcal{F}_{0}(X)$ such that $F\in\mbox{comm}_{w}(T).$ Then $T$ is upper semi-B-Fredholm (resp., lower semi-B-Fredholm, B-Fredholm, left Drazin invertible, right Drazin invertible, Drazin invertible)   if and only if $T+F$ is.
\end{thm}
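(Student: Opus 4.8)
The plan is to isolate, for each of the six classes, a characterization that splits into a \emph{finiteness} part (finiteness of $p$, $q$, $p_{e}$, $q_{e}$ or $m$) and a \emph{closed range} part, to dispose of the finiteness part by Theorem \ref{thm3f.1}, and to concentrate all the real work on the closed range part. Throughout I use that $F\in\mathcal{F}_{0}(X)$ is in particular a Riesz operator (indeed $\pi(F)$ is nilpotent in the Calkin algebra, since $F^{m}$ has finite rank for some $m$), and that $F\in\mbox{comm}_{w}(T)$ encodes the relations $T^{2}F=TFT=FT^{2}$ and $F^{2}T=FTF=TF^{2}$. The easiest case is Drazin invertibility: recalling that $T$ is Drazin invertible if and only if $p(T)<\infty$ and $q(T)<\infty$ (finite ascent and descent being automatically equal), Theorem \ref{thm3f.1}(i)--(ii) gives the equivalence for $T+F$ at once, with no closed range left to check, since finite ascent and descent already force the relevant ranges to be closed.

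Next I would reduce the left/right Drazin cases to the semi-B-Fredholm cases. Since $p(T)<\infty$ forces $p_{e}(T)\le p(T)<\infty$, one has, via the well-known characterization ``left Drazin invertible $\iff p(T)<\infty$ and $\R(T^{p(T)+1})$ closed'', that $T$ is left Drazin invertible if and only if $p(T)<\infty$ and $T$ is upper semi-B-Fredholm; dually $T$ is right Drazin invertible if and only if $q(T)<\infty$ and $T$ is lower semi-B-Fredholm. Granting the semi-B-Fredholm equivalences and using Theorem \ref{thm3f.1}(i)--(ii) for the ascent/descent, the left and right Drazin equivalences follow, and B-Fredholm is just the conjunction of the upper and lower semi-B-Fredholm cases. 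So everything comes down to proving: $T$ is lower semi-B-Fredholm if and only if $T+F$ is (the upper case then follows by passing to adjoints, since $F^{*}\in\mathcal{F}_{0}(X^{*})$, $F^{*}\in\mbox{comm}_{w}(T^{*})$, and $T$ is upper semi-B-Fredholm exactly when $T^{*}$ is lower semi-B-Fredholm).

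The heart of the matter, and the step I expect to be the main obstacle, is preservation of the closed range condition for the lower semi-B-Fredholm case. By Theorem \ref{thm3f.1}(iii) we already know $q_{e}(T)<\infty\iff q_{e}(T+F)<\infty$; assuming $T$ is lower semi-B-Fredholm I must produce one closed power-range of $T+F$. Write $R_{j}=\R(T^{j})$ and $S_{j}=\R((T+F)^{j})$. Fixing $m$ with $\mbox{dim}\,\R(F^{m})<\infty$ and choosing $p\ge\max\{3,q_{e}(T),q_{e}(T+F)\}$, I would combine Lemma \ref{lem2f.1}(ii) (which controls $S_{p+m-1}\cap R_{p}$ inside $S_{p+m-1}$, and $R_{p+m-1}\cap S_{p}$ inside $R_{p+m-1}$) with the two finiteness facts $\mbox{dim}(R_{p}/R_{p+m-1})=\sum_{i=p}^{p+m-2}\beta(T_{[i]})<\infty$ and $\mbox{dim}(S_{p}/S_{p+m-1})=\sum_{i=p}^{p+m-2}\beta((T+F)_{[i]})<\infty$ (both finite because $p$ exceeds the two essential descents). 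A short diagram chase then yields $\mbox{dim}\big(R_{p}/(R_{p}\cap S_{p+m-1})\big)<\infty$, so that $S_{p+m-1}$ sits inside the closed subspace $R_{p}+\R(F^{m})$ with finite codimension there. Since $S_{p+m-1}$ is the range of the bounded operator $(T+F)^{p+m-1}$, and a bounded operator whose range has finite codimension in a closed subspace has closed range, $S_{p+m-1}$ is closed.

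Finally I would propagate this single closed range down to $\R((T+F)^{q_{e}(T+F)})$: for every $k\ge q_{e}(T+F)$ the quotient $S_{k}/S_{k+1}$ is finite dimensional, so closedness of $S_{k}$ passes to $S_{k+1}$ (a finite-codimension range in a closed space is closed), and conversely closedness of $S_{k+1}$ passes to $S_{k}=S_{k+1}+(\text{finite dimensional})$; hence all $S_{k}$ with $k\ge q_{e}(T+F)$ are closed, in particular $\R((T+F)^{q_{e}(T+F)})$, and $T+F$ is lower semi-B-Fredholm. The converse direction is the same statement applied to $T+F$ and $-F$. The genuine difficulty is exactly the power mismatch in Lemma \ref{lem2f.1} (it compares powers that differ by $m-1$); what makes the estimate close is that the essential descents of \emph{both} $T$ and $T+F$ are finite, so that the ranges of consecutive powers are commensurable and the mismatch costs only a finite-dimensional correction.
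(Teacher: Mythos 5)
Your proposal is correct, but the engine of your proof is genuinely different from the paper's. Both arguments share the same skeleton --- Theorem \ref{thm3f.1} disposes of all finiteness conditions, one semi-B-Fredholm case is proved directly and the other obtained by duality (via \cite{kordula-muller.axiom}), and the Drazin-type cases follow by conjunction with \ref{thm3f.1}(i)--(ii) --- but the closed-range step is carried out by different means, and in mirror image: the paper proves the \emph{upper} case directly and dualizes to get the lower one, while you prove the \emph{lower} case directly. To preserve closedness, the paper passes to the quotient $X/\mathcal{N}(T^{d+3})$, applies the semi-Fredholm perturbation theorem of \cite{aznay-ouahab-zariouh7} to the induced operators to conclude that $\R[(T+F)^{l}]+\mathcal{N}(T^{d+3})$ is closed, and then strips off the kernel summand using the kernel half of Lemma \ref{lem2f.1} together with Neubauer's lemma \cite{labrousse}. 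You stay entirely on the range side: the range half of Lemma \ref{lem2f.1}, combined with the finiteness of the essential descents of \emph{both} $T$ and $T+F$, shows that $\R[(T+F)^{p+m-1}]$ has finite codimension in the closed subspace $\R(T^{p})+\R(F^{m})$, and the open-mapping-theorem fact that an operator range of finite codimension in a closed subspace is closed finishes the job, after which closedness propagates to $\R[(T+F)^{q_{e}(T+F)}]$ by adding a finite-dimensional piece. Your route buys self-containedness --- no quotient construction, no external semi-Fredholm perturbation theorem, no Neubauer lemma --- and it makes transparent why the power mismatch in Lemma \ref{lem2f.1} is harmless; the paper's route buys brevity by recycling existing Fredholm-theoretic machinery, and it is the natural continuation of its proof of Theorem \ref{thm3f.1}(iii), which already contains your kind of range computation. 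Two small points you should write out to make the argument airtight: the inclusion $\R[(T+F)^{p+m-1}]\subset\R(T^{p})+\R(F^{m})$, which comes from the expansion $(T+F)^{p+m-1}=T^{p}A+F^{m}B$ in the proof of Lemma \ref{lem2f.1} (alternatively, the statement of that lemma already furnishes a finite-dimensional complement of $\R(T^{p})\cap\R[(T+F)^{p+m-1}]$ inside $\R[(T+F)^{p+m-1}]$, which suffices); and the closedness of $\R(T^{p})$ for $p\geq q_{e}(T)$, which follows from the same finite-codimension lemma applied inside the closed subspace $\R(T^{q_{e}(T)})$.
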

\begin{proof}    Suppose that $T$ is upper semi-B-Fredholm.   Theorem  \ref{thm3f.1} implies that  $\R(T^{p_{e}(T)+1})$ is closed and    $p_{e}(T+F)<\infty.$  Since for every $n\geq 2,$   $FT^{n}=T^{n}F$ then  $F(\mathcal{N}(T^{n}))\subset \mathcal{N}(T^{n}).$  Consider $\tilde{T}$ and $\tilde{F}$ the operators    induced by $T$ and $F$ on $\tilde{X}=X/\mathcal{N}(T^{d+3}),$  where $d=\mbox{dis}(T).$ It is easily seen that  $\tilde{T}$ is   upper semi-Fredholm and  $\tilde{F} \in \mathcal{F}_{0}(\tilde{X}).$ From \cite[Propositon  4.18]{aznay-ouahab-zariouh7}, we deduce that $\tilde{T}+\tilde{F}$ is upper semi-Fredholm.  Hence $\R[(T+F)^{l}]+\mathcal{N}(T^{d+3})$ is closed for every  $l\in \N.$ Furthermore, Lemma \ref{lem2f.1} implies that  $\displaystyle\mbox{dim}\frac{\mathcal{N}(T^{n})}{\mathcal{N}[(T+F)^{n+m-1}]\cap\mathcal{N}(T^{n})}<\infty,$ where $n\geq 3$ and $m\geq 1$    such that $\mbox{dim}\,\mathcal{R}(F^{m})<\infty.$ Hence  $$\displaystyle\mbox{dim}\frac{\mathcal{R}[(T+F)^{n+m-1}]\cap\mathcal{N}(T^{n})}{\mathcal{R}[(T+F)^{n+m-1}]\cap\mathcal{N}[(T+F)^{n+m-1}]\cap\mathcal{N}(T^{n})}<\infty.$$ As $\alpha((T+F)_{[n+m-1]}^{n+m-1})<\infty$  then   $\mbox{dim}\left(\R[(T+F)^{n+m-1}]\cap\mathcal{N}(T^{n})\right)<\infty$ for every integer $n\geq \mbox{max}\{3,p_{e}(T+F)\}.$   From the Neubauer Lemma \cite[Proposition 2.1.1]{labrousse}, we conclude that $\R[(T+F)^{n+m-1}]$ is closed.   Hence $T+F$ is upper semi-B-Fredholm. If $T$ is lower semi-B-Fredholm, then from   \cite{kordula-muller.axiom},  $T^{*}$ is upper semi-B-Fredholm, and consequently $T^{*}+F^{*}$ is upper semi-B-Fredholm. Thus   $T+F$ is lower semi-B-Fredholm (see again \cite{kordula-muller.axiom}).      If $T$ is left Drazin invertible, then $T$ is upper semi-B-Fredholm and $p(T)<\infty.$ So   $T+F$ is upper semi-B-Fredholm, and from Theorem  \ref{thm3f.1} we have $p(T+F)<\infty.$ Thus $T+F$ is left Drazin invertible.  The other cases  go similarly.  Since $F\in\mbox{comm}_{w}(T)$ if and only if $(-F)\in\mbox{comm}_{w}(T+F),$ the proof is complete.
\end{proof}
\begin{cor}\label{corperdrazin} If $T\in L(X)$ is generalized Drazin-Riesz invertible and $F \in \mathcal{F}_{0}(X)$ such that $F\in\mbox{comm}_{l}(T)$ and $T \in \mbox{comm}(TF)$ {\rm[}or $F\in\mbox{comm}_{r}(T)$ and $T \in \mbox{comm}(FT)${\rm]}, then $\sigma_{*}(T)\setminus\{0\}=\sigma_{*}(T+F)\setminus\{0\},$ where  $\sigma_{*}\in\{\sigma_{bf},\sigma_{ubf},\sigma_{lbf},\sigma_{d},\sigma_{ld},\sigma_{rd},\sigma_{gd},\sigma_{g_{z}d}\}.$  If in addition $F\in\mbox{comm}_{w}(T),$ then  $\sigma_{*}(T)=\sigma_{*}(T+F).$
\end{cor}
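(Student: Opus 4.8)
The plan is to transport everything to the two blocks of the reducing splitting provided by the generalized Drazin--Riesz invertibility of $T$, dispose of the Riesz block for free, and reduce the invertible block to Theorem \ref{cor2f.1} after promoting the one--sided commutation to genuine commutation.

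\textbf{The six Fredholm/Drazin spectra (first assertion).} I would fix $(M,N)\in\mbox{IRed}(T)$ with $T_{M}$ invertible and $T_{N}$ Riesz. As in the proof of Theorem \ref{propgenriesznilp}, the commutation hypotheses yield $T\in\mbox{comm}(TF)\cap\mbox{comm}(FT)$, so Corollary \ref{corired} gives $(M,N)\in\mbox{Red}(F)$; hence $F=F_{M}\oplus F_{N}$, each block is again power finite rank, and $T+F=(T_{M}+F_{M})\oplus(T_{N}+F_{N})$. On $N$ the sum $T_{N}+F_{N}$ is Riesz by Proposition \ref{propriesz}, so $\sigma_{*}(T_{N})\cup\sigma_{*}(T_{N}+F_{N})\subseteq\{0\}$ and the $N$--block is invisible off $0$. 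On $M$, restricting $TF\in\mbox{comm}(T)$ and cancelling the invertible $T_{M}$ forces $T_{M}F_{M}=F_{M}T_{M}$ (in the $\mbox{comm}_{r}$ case one uses $FT\in\mbox{comm}(T)$). Genuine commutation survives every shift $T_{M}-\lambda$, so Theorem \ref{cor2f.1} applies to $(T_{M}-\lambda,F_{M})$ for each $\lambda$ and gives $\sigma_{*}(T_{M})=\sigma_{*}(T_{M}+F_{M})$ for $\sigma_{*}\in\{\sigma_{bf},\sigma_{ubf},\sigma_{lbf},\sigma_{d},\sigma_{ld},\sigma_{rd}\}$. Adding the two blocks yields $\sigma_{*}(T)\setminus\{0\}=\sigma_{*}(T+F)\setminus\{0\}$.

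\textbf{The two generalized spectra (first assertion).} Here I would argue through $\sigma_{gd}(\cdot)=\mbox{acc}\,\sigma(\cdot)$ (and the analogous description of $\sigma_{g_{z}d}$ from \cite{aznay-ouahab-zariouh6}). The $N$--block again contributes only inside $\{0\}$. On $M$ the commuting power finite rank operator $F_{M}$ has finite spectrum, so its Riesz idempotents are polynomials in $F_{M}$ and therefore commute with $T_{M}$; they split $M$ into finitely many finite--dimensional summands (which create no accumulation points) and one summand on which $F_{M}$ is nilpotent (where a commuting nilpotent leaves the spectrum unchanged). Thus $\mbox{acc}\,\sigma(T_{M}+F_{M})=\mbox{acc}\,\sigma(T_{M})$, and the first assertion is proved for all eight spectra.

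\textbf{The second assertion and the main obstacle.} Now $F\in\mbox{comm}_{w}(T)$, so the first assertion already gives equality off $0$ and only the point $0$ remains. For the six Fredholm/Drazin spectra this is Theorem \ref{cor2f.1} applied directly to $(T,F)$. The delicate case, which I expect to be the heart of the proof, is $\sigma_{gd},\sigma_{g_{z}d}$ at $0$, where on the non-invertible block genuine commutation is lost. Using the symmetry $-F\in\mbox{comm}_{w}(T+F)$ it suffices to prove the forward implication: if $S$ is generalized Drazin invertible and $E\in\mbox{comm}_{w}(S)$ is power finite rank, then $S+E$ is generalized Drazin invertible. Splitting $S$ along $(\mathcal{K}(S),\mathcal{H}_{0}(S))\in\mbox{IRed}(S)$ (Corollary \ref{corired}) reduces this, via the invertible--block argument above, to showing that $Q+G$ has $0$ as an isolated spectral point, where $Q:=S_{\mathcal{H}_{0}(S)}$ is quasi-nilpotent and $G:=E_{\mathcal{H}_{0}(S)}$ is power finite rank with $G\in\mbox{comm}_{w}(Q)$. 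The key observation is that $\mbox{comm}_{w}$ forces $Q$ to commute with $G^{2}$ (though not necessarily with $G$): as $G^{2}$ is again power finite rank, its Riesz idempotents are polynomials in $G^{2}$, hence commute with $Q$ and decompose the block into finitely many finite--dimensional summands and one summand on which $G$ is nilpotent. On the latter $Q+G$ is a sum of two quasi-nilpotents obeying a one--sided commutation, hence quasi-nilpotent by \cite[Corollary 3.10]{aznay-ouahab-zariouh7}. Therefore $\sigma(Q+G)$ is finite, $0$ is isolated, and $S+E$ is generalized Drazin invertible; the $g_{z}$--case runs identically in the scheme of \cite{aznay-ouahab-zariouh6}. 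The remaining work is the bookkeeping of the direct--sum splitting.
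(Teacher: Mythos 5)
Your architecture is exactly what the paper intends (its own ``proof'' is a one-line deferral to the reader): split along $(M,N)\in\mbox{IRed}(T)$, discard the Riesz block off $0$, upgrade to genuine commutation on the invertible block and invoke Theorem \ref{cor2f.1} there, then use the $\mbox{comm}_{w}$ symmetry at the point $0$. But your very first step is false as stated, and it is load-bearing for the entire first assertion: you claim that $F\in\mbox{comm}_{l}(T)$ together with $T\in\mbox{comm}(TF)$ yields $T\in\mbox{comm}(TF)\cap\mbox{comm}(FT)$. Unwinding the definition, $F\in\mbox{comm}_{l}(T)$ says $TF\in\mbox{comm}(T)$ and $FT\in\mbox{comm}(F)$; the hypothesis $T\in\mbox{comm}(TF)$ is literally a repetition of the first of these, and nothing supplies $FT\in\mbox{comm}(T)$. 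Concretely, take $X=\C^{2}\oplus Y$ with $Y$ infinite dimensional, $T(x,y,z)=(x,0,z)$ and $F(x,y,z)=(0,x,0)$. Then $TF=0$ and $FT=F$, so $F\in\mathcal{F}_{0}(X)$, $F\in\mbox{comm}_{l}(T)$ and $T\in\mbox{comm}(TF)$ hold trivially, while $T(FT)=0\neq F=(FT)T$. Moreover $(M,N)=\bigl(\{(x,0,z)\},\{(0,y,0)\}\bigr)$ lies in $\mbox{IRed}(T)$ with $T_{M}$ invertible and $T_{N}=0$ Riesz, yet $F(M)\subseteq N$, so $(M,N)\notin\mbox{Red}(F)$. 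Hence Corollary \ref{corired} cannot be invoked under the stated one-sided hypotheses, the splitting $F=F_{M}\oplus F_{N}$ need not exist, and everything you build on it --- both the six Fredholm/Drazin spectra and the $\sigma_{gd},\sigma_{g_{z}d}$ part of the first assertion --- is unsupported. (In this example the conclusion itself still holds, since $T$ and $T+F$ are both idempotent; the example refutes your step, not the corollary.)

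In fairness, this gap is inherited: the paper's own proof of Theorem \ref{propgenriesznilp} applies Corollary \ref{corired} under the same one-sided hypotheses with no justification, and the conditions that actually make the scheme run are $TF\in\mbox{comm}(T)$ \emph{and} $FT\in\mbox{comm}(T)$, which is presumably what the authors meant to assume. Your second assertion is sound: $F\in\mbox{comm}_{w}(T)$ genuinely gives both $TF,FT\in\mbox{comm}(T)$, so the reduction is legitimate there; applying Theorem \ref{cor2f.1} directly at $0$ is correct; and your handling of $\sigma_{gd}$ and $\sigma_{g_{z}d}$ --- extracting $G^{2}Q=G(QG)\,G$-type identities from $\mbox{comm}_{w}$ (indeed $G^{2}Q=GQG=QG^{2}$ follows from $GQ,QG\in\mbox{comm}(G)$), splitting off the finite-dimensional spectral subspaces of the algebraic operator $G^{2}$, and finishing with the one-sided sum of quasi-nilpotents via \cite[Corollary 3.10]{aznay-ouahab-zariouh7} --- is correct and supplies details the paper nowhere provides. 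So the verdict is: with the hypotheses read (or repaired) as $T\in\mbox{comm}(TF)\cap\mbox{comm}(FT)$, your proof goes through and is essentially the intended one; under the hypotheses as literally written, it has a genuine gap at the first step.
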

\begin{proof}  We will leave these routine arguments as exercise for the reader.
\end{proof}
\begin{rema}\label{remanonalg} In \cite[Proposition 3.3]{burgos-oudghiri}, the authors proved that if $X$ is an infinite dimensional complex Banach space and $T\in L(X),$ then there exists   a non-algebraic operator $S\in \mbox{comm}(T).$    From the proof of this result and the one of  \cite[Lemma 3.83]{Aiena1}, it is easy to see  that if in addition  $T$ is an algebraic operator,   then we can consider  $S$ as a  compact operator.
\end{rema}
The next proposition gives a new characterization of power finite rank operators.
\begin{prop}\label{charactfinite} Let   $F \in L(X)$ and   $\sigma_{*}\in\{\sigma_{bf},\sigma_{ubf},\sigma_{lbf},\sigma_{d},\sigma_{ld},\sigma_{rd}\}.$ The following statements are equivalent:\\
(i) $F \in \mathcal{F}_{0}(X);$\\
(ii) $\sigma_{*}(T)=\sigma_{*}(T+F)$ for every generalized Drazin-Riesz invertible operator $T\in\mbox{comm}_{w}(F);$\\
(iii) $\sigma_{*}(T)=\sigma_{*}(T+F)$ for every  $T\in  \mbox{comm}(F).$
\end{prop}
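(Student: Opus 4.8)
The plan is to obtain the two implications issuing from $(i)$ for free from the perturbation theorems already established, and to put all the work into the converse, which I would prove by contraposition with a single construction that serves both $(ii)$ and $(iii)$.

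For $(i)\Rightarrow(ii)$, let $T$ be generalized Drazin-Riesz invertible with $T\in\mbox{comm}_{w}(F)$. Since the relation $\mbox{comm}_{w}$ is symmetric we have $F\in\mbox{comm}_{w}(T)$, hence in particular $F\in\mbox{comm}_{l}(T)$ and $T\in\mbox{comm}(TF)$; Corollary \ref{corperdrazin}, in its ``in addition $F\in\mbox{comm}_{w}(T)$'' form, then gives $\sigma_{*}(T)=\sigma_{*}(T+F)$. For $(i)\Rightarrow(iii)$, let $T\in\mbox{comm}(F)$. For every $\lambda\in\C$ the translate $T-\lambda I$ still commutes with $F$, so $F\in\mbox{comm}_{w}(T-\lambda I)$, and Theorem \ref{cor2f.1} shows that $T-\lambda I$ is upper semi-B-Fredholm (resp.\ lower semi-B-Fredholm, B-Fredholm, left Drazin, right Drazin, Drazin invertible) if and only if $(T+F)-\lambda I$ is; reading this pointwise yields $\sigma_{*}(T)=\sigma_{*}(T+F)$ for each of the six spectra.

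For the converse assume $(ii)$ (or $(iii)$). Testing against $T=0$, which is central and generalized Drazin-Riesz invertible, forces $\sigma_{*}(F)=\sigma_{*}(0)=\emptyset$. I would first deduce that $F$ is algebraic: for $\sigma_{d}$ this is the equivalence ``empty Drazin spectrum $\Leftrightarrow$ algebraic'' recalled after Theorem \ref{propgenriesznilp}; for the five remaining spectra I would note that $\sigma_{ubf}(F)=\emptyset$ or $\sigma_{lbf}(F)=\emptyset$ (as $\sigma_{ubf}\subseteq\sigma_{*}$ or $\sigma_{lbf}\subseteq\sigma_{*}$), and that local constancy of the semi-B-Fredholm index together with its vanishing at infinity upgrades this to $\sigma_{bf}(F)=\emptyset$, after which the analytic Fredholm theorem forces $\sigma(F)$ finite and Berkani's isolated-point theorem makes $F$ Drazin invertible at each spectral value, i.e.\ algebraic. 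Supposing now $F\notin\mathcal{F}_{0}(X)$, the finite spectral decomposition of the algebraic operator $F$ must contain a block $X_{j}=\mathcal{N}((F-\lambda_{j})^{m_{j}})$ with $\lambda_{j}\neq0$ and $\dim X_{j}=\infty$; write $X=X_{j}\oplus Z_{j}$ with $Z_{j}$ the complementary $F$-invariant subspace, so that $F_{X_{j}}=\lambda_{j}I+N_{j}$ with $N_{j}$ nilpotent and $\lambda_{j}\notin\sigma(F_{Z_{j}})$.

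The decisive step is the construction on $X_{j}$. As $F_{X_{j}}$ is algebraic, Remark \ref{remanonalg} provides a compact, non-algebraic operator $S\in\mbox{comm}(F_{X_{j}})$ on $X_{j}$. I would take $T=S\oplus 0_{Z_{j}}$; it commutes with $F$ (hence lies in $\mbox{comm}_{w}(F)$) and is Riesz, so it is generalized Drazin-Riesz invertible via $(\{0\},X)\in\mbox{Red}(T)$. Since $\sigma_{*}$ is additive over reducing sums and a Riesz operator has $\sigma_{*}\subseteq\{0\}$, we get $\sigma_{*}(T)=\sigma_{*}(S)\cup\sigma_{*}(0_{Z_{j}})\subseteq\{0\}$, so $\lambda_{j}\notin\sigma_{*}(T)$. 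On the other hand $(T+F)_{X_{j}}=\lambda_{j}I+(S+N_{j})$, where $R:=S+N_{j}$ is Riesz (Proposition \ref{propriesz}, commuting Riesz summands) and non-algebraic (else $S=R-N_{j}$ would be algebraic). Granting the lemma below that $0\in\sigma_{*}(R)$, we obtain $\lambda_{j}\in\sigma_{*}((T+F)_{X_{j}})\subseteq\sigma_{*}(T+F)$. Thus $\lambda_{j}\in\sigma_{*}(T+F)\setminus\sigma_{*}(T)$, contradicting the assumed equality; as $T$ is a legitimate test operator for both $(ii)$ and $(iii)$, this yields $(ii)\Rightarrow(i)$ and $(iii)\Rightarrow(i)$ at once.

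The hard part, and the analytic heart of the argument, is the lemma that \emph{a Riesz, non-algebraic operator $R$ satisfies $0\in\sigma_{ubf}(R)\cap\sigma_{lbf}(R)$}, whence $0\in\sigma_{*}(R)$ for all six spectra since each contains $\sigma_{ubf}$ or $\sigma_{lbf}$. I would argue the contrapositive: if $R$ is Riesz and upper semi-B-Fredholm at $0$, then by Berkani's decomposition $R=R_{0}\oplus R_{1}$ with $R_{0}$ upper semi-Fredholm and $R_{1}$ nilpotent; the reducing restriction $R_{0}$ is again Riesz, and Kato's punctured neighbourhood theorem gives $\mbox{ind}\,R_{0}=\mbox{ind}(R_{0}-\mu)=0$ for small $\mu\neq0$ (the nearby operators being Fredholm of index $0$ as $R_{0}$ is Riesz), so $R_{0}$ is Fredholm, i.e.\ $0\notin\sigma_{e}(R_{0})$; but a Riesz operator on an infinite-dimensional space has $\sigma_{e}=\{0\}$, so $R_{0}$ acts on a finite-dimensional space and $R=R_{0}\oplus R_{1}$ is algebraic. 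The same reduction also underlies the step ``$\sigma_{*}(F)=\emptyset\Rightarrow F$ algebraic'' used above; everything else is the bookkeeping of the block decomposition.
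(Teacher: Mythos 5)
Your forward implications are exactly the paper's ((i)$\Rightarrow$(ii) via Corollary \ref{corperdrazin}, (i)$\Rightarrow$(iii) via Theorem \ref{cor2f.1} applied to each translate $T-\lambda I$), and your converse follows the paper's architecture up to the construction of the test operator: $\sigma_{*}(F)=\emptyset$, hence $F$ algebraic, spectral decomposition, and a compact non-algebraic $S\in\mbox{comm}(F_{X_{j}})$ from Remark \ref{remanonalg}. The genuine gap is in your key lemma. You prove ``$R$ Riesz and upper semi-B-Fredholm $\Rightarrow R$ algebraic'' by invoking Berkani's decomposition of a semi-B-Fredholm operator as (semi-Fredholm) $\oplus$ (nilpotent). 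That decomposition is a \emph{Hilbert-space} theorem (\cite[Theorem 2.6]{berkani-sarih}); it is not available on a general Banach space $X$, which is the setting of the proposition. The paper itself treats it as Hilbert-specific: in Theorem \ref{propgenriesznilp}(ii) the conclusion $\sigma_{+}(T)=\sigma_{+}(T+R)$ is asserted only ``if in addition $X$ is a Hilbert space,'' precisely because it rests on that result. So the analytic heart of your converse is unproved in the stated generality. The lemma itself is true on Banach spaces, but by a different route: with $d=p_{e}(R)$, the subspace $\R(R^{d})$ is closed (essential-ascent theory, cf.\ \cite{oudghiri}), the induced operator $R_{[d]}$ is upper semi-Fredholm and still Riesz (restrictions of Riesz operators to closed invariant subspaces are Riesz, \cite[Lemma 15]{kordula-muller}), hence Fredholm of index $0$ by the classical punctured-neighbourhood theorem; a Riesz operator on an infinite-dimensional space is never Fredholm, so $\dim \R(R^{d})<\infty$, $R\in\mathcal{F}_{0}(X)$, and $R$ is algebraic. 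A secondary weak point: your claim that $\sigma_{*}$ ``is additive over reducing sums'' is false in general for B-Fredholm-type spectra (direct sums of B-Fredholm operators need not be B-Fredholm); what saves you is only the special case where the complementary summand $(T+F)_{Z_{j}}-\lambda_{j}I=F_{Z_{j}}-\lambda_{j}I$ is invertible, and that case should be argued (it is what the paper imports from \cite[Corollary 2.10]{zeng} in the algebraic-summand form).

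It is worth seeing how the paper sidesteps your lemma entirely. With the same test operator, it uses the hypothesis plus \cite[Corollary 2.10]{zeng} and stability under commuting nilpotent perturbations ($F_{i}-\lambda_{i}I$ is nilpotent, hence in $\mathcal{F}_{0}$) to get the chain $\sigma_{*}(S_{i})=\sigma_{*}(S_{i}+F_{i})=\sigma_{*}(S_{i}+\lambda_{i}I)=\lambda_{i}+\sigma_{*}(S_{i})$; since $S_{i}$ is non-algebraic, $\sigma_{*}(S_{i})$ is a nonempty bounded subset of $\C$, and a nonempty bounded set cannot be invariant under a nonzero translation, so $\lambda_{i}=0$, a contradiction. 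No localization of a point of the spectrum, and hence no decomposition theorem, is needed --- only the two zeng ingredients (direct sums with algebraic summands, and ``$\sigma_{*}=\emptyset$ iff algebraic''). Your own derivation of ``$\sigma_{*}(F)=\emptyset\Rightarrow F$ algebraic'' via the analytic Fredholm theorem and ``Berkani's isolated-point theorem'' leans on the same Hilbert-flavoured machinery and should likewise be replaced by a citation of that known Banach-space fact.
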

\begin{proof} (i) $\Longrightarrow$ (ii) Is a consequence of  Corollary \ref{corperdrazin}.\\
  (ii) $\Longrightarrow$ (i) We have $\sigma_{*}(F)=\emptyset.$ So   $F$ is algebraic and  $\sigma(F)=\{\lambda_{1},\dots,\lambda_{n}\}.$ Thus $X=X_{1}\oplus\dots \oplus X_{n},$ where $X_{i}=\mathcal{N}((F-\lambda_{i} I)^{m_{i}})$ for some $m_{i}.$ If $F \notin \mathcal{F}_{0}(X),$ then  there exists $1\leq i \leq n$ such that  $\lambda_{i}\neq 0$ and $\mbox{dim}\,X_{i}=\infty.$  As $F_{i}-\lambda_{i}I$  is nilpotent,    from Remark \ref{remanonalg},  there exists a non-algebraic compact  operator  $S_{i}\in \mbox{comm}(F_{i}),$ where $F_{i}=F_{X_{i}}.$  The operator $S=0_{1}\oplus\dots\oplus S_{i} \oplus \dots \oplus 0_{n},$ where $0_{j}=0_{X_{j}},$ is  non-algebraic, compact    and  commutes with $F.$ By hypothesis we have    $\sigma_{*}(S)=\sigma_{*}(S+F),$ this entails, from  \cite[Corollary 2.10]{zeng} that  $\sigma_{*}(S)=\sigma_{*}(S_{i})=\sigma_{*}(S_{i}+F_{i})=\sigma_{*}(S_{i}+\lambda_{i}I)=\sigma_{*}(S+F),$ since $F_{i} -\lambda_{i} I$ is nilpotent.  Hence $\lambda_{i}=0$ and this  is a contradiction.  Thus  $F\in\mathcal{F}_{0}(X).$ \\ 
 (i) $\Longrightarrow$ (iii) Is a consequence of Theorem  \ref{cor2f.1}, and   (iii) $\Longrightarrow$ (i)  is proved in  \cite[Theorem 2.11]{zeng}.
\end{proof}
The next proposition  extends the second assertion of  \cite[Corollary 3.5]{berkani}.  $\mathcal{F}(X)$ denotes the ideal of finite rank operator in $L(X).$
\begin{prop} Let $S,T\in L(X)$ be  B-Fredholm operators, if        $S\in\mbox{comm}_{r}(T)\cup\mbox{comm}_{l}(T),$   then $TS$ is B-Fredholm.
\end{prop}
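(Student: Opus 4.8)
The plan is to transfer the problem into the Calkin algebra $\mathcal{C}=L(X)/K(X)$ and to reduce it to the ring-theoretic product result already proved in Proposition~\ref{productDrazin}. Recall that an operator $R\in L(X)$ is B-Fredholm if and only if $\pi(R)=R+K(X)$ is Drazin invertible in $\mathcal{C}$: writing the Berkani decomposition $R=R_{1}\oplus R_{2}$ with $R_{1}$ Fredholm and $R_{2}$ nilpotent, the summand $R_{1}$ becomes invertible and $R_{2}$ nilpotent under $\pi$, so $\pi(R)$ is the sum of an invertible element and a commuting nilpotent one, i.e.\ Drazin invertible; conversely such a splitting in $\mathcal{C}$ lifts to a B-Fredholm operator. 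Since $\pi$ is a unital algebra homomorphism, it preserves each of the defining relations of $\mbox{comm}_{r}$ and $\mbox{comm}_{l}$, so that $\pi(S)\in\mbox{comm}_{r}(\pi(T))$ (resp.\ $\pi(S)\in\mbox{comm}_{l}(\pi(T))$), while $\pi(T)$ and $\pi(S)$ are Drazin invertible because $T$ and $S$ are B-Fredholm. It therefore suffices to show, in the ring $\mathcal{C}$, that $ab$ is Drazin invertible, where $a=\pi(T)$ and $b=\pi(S)$; B-Fredholmness of $TS=ab$ then follows at once.

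In the case $S\in\mbox{comm}_{l}(T)$ this is immediate. The relation lifts to $a\in\mbox{comm}_{l}(b)$, and Proposition~\ref{productDrazin}(ii), applied inside $\mathcal{C}$, yields that $ab=\pi(TS)$ is Drazin invertible (indeed $(ab)^{D}=a^{D}b^{D}$). Hence $TS$ is B-Fredholm.

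The case $S\in\mbox{comm}_{r}(T)$ is the delicate one and is where I expect the real difficulty. Now $a\in\mbox{comm}_{r}(b)$ only gives $ab\in\mbox{comm}(b)$ and $ba\in\mbox{comm}(a)$, and Proposition~\ref{productDrazin}(i) cannot be applied verbatim, since it also requires $ab\in\mbox{comm}(a)$ — a relation that is \emph{not} a consequence of $a\in\mbox{comm}_{r}(b)$. The way around this is to forget the explicit inverse $b^{D}a^{D}$ and to prove Drazin invertibility on its own. By \cite[Lemma~3.1]{aznay-ouahab-zariouh7} the relation $a\in\mbox{comm}_{r}(b)$ gives $(ab)^{n}=a^{n}b^{n}$ for all $n$, and $ab\in\mbox{comm}(b)$ yields $a^{m}b\in\mbox{comm}(b^{D})$. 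Carrying out only the first half of the computation in the proof of Proposition~\ref{productDrazin}(i) — the half that does not use the missing hypothesis — one gets, for every $m\geq\max\{i(a),i(b)\}$,
$$b^{D}a^{D}(ab)^{m+1}=b^{D}a^{D}a^{m+1}b^{m+1}=b^{D}a^{m}b^{m+1}=(a^{m}b)b^{D}b^{m}=a^{m}b^{m}=(ab)^{m}.$$
This single identity shows that the descending chain of principal left ideals $\mathcal{C}(ab)\supseteq\mathcal{C}(ab)^{2}\supseteq\cdots$ stabilises. By Dischinger's theorem, one-sided stabilisation of this kind already forces $ab$ to be strongly $\pi$-regular, hence Drazin invertible in $\mathcal{C}$; consequently $TS$ is B-Fredholm. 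As a cross-check, Cline's formula guarantees that $ab=\pi(TS)$ and $ba=\pi(ST)$ are Drazin invertible simultaneously.

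The main obstacle is thus concentrated in the $\mbox{comm}_{r}$ case: the clean product formula of Proposition~\ref{productDrazin}(i) is unavailable because the auxiliary commutation $ab\in\mbox{comm}(a)$ fails, and the fix is to establish only the one-sided power identity above and to invoke the strong $\pi$-regularity characterisation of Drazin invertibility. Everything else — the Calkin reduction, the transfer of the commutation relations, and the whole $\mbox{comm}_{l}$ case — is routine; and since the statement asserts only B-Fredholmness of $TS$ (not an index formula), discarding the explicit Drazin inverse costs nothing.
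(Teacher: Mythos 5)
Your reduction rests on a characterization that is false in exactly the direction your proof needs. It is not true that $R\in L(X)$ is B-Fredholm if and only if $R+K(X)$ is Drazin invertible in the Calkin algebra: if $Q$ is a compact quasinilpotent operator which is not nilpotent (a weighted shift with weights $1/n$, say), then $\pi(Q)=0$ is trivially Drazin invertible in $L(X)/K(X)$, yet $Q$ is not B-Fredholm, because a quasinilpotent B-Fredholm operator is nilpotent (in Berkani's decomposition its Fredholm summand would be Fredholm and quasinilpotent, hence Browder and quasinilpotent, hence nilpotent, forcing that summand to act on a finite-dimensional space). So the closing step of both of your cases --- ``$\pi(TS)$ Drazin invertible in $\mathcal{C}$, hence $TS$ is B-Fredholm'' --- is invalid. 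This is precisely why the paper works modulo the ideal $\mathcal{F}(X)$ of finite-rank operators and invokes the Atkinson-type theorem of Berkani--Sarih \cite[Theorem 3.4]{berkani}, for which the equivalence ``B-Fredholm $\Leftrightarrow$ Drazin invertible in $L(X)/\mathcal{F}(X)$'' genuinely holds in both directions; $L(X)/\mathcal{F}(X)$ is only a ring, not a Banach algebra, but Proposition~\ref{productDrazin} is a ring statement, so nothing is lost. With $K(X)$ replaced by $\mathcal{F}(X)$, your $\mbox{comm}_{l}$ case becomes correct and coincides with the paper's argument.

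The $\mbox{comm}_{r}$ case has a second, independent gap: Dischinger's theorem does not say what you use it for. It asserts that a ring in which \emph{every} element is left strongly $\pi$-regular is right strongly $\pi$-regular; it is a statement about rings, not about individual elements, and element-wise the implication fails. Your identity $b^{D}a^{D}(ab)^{m+1}=(ab)^{m}$ only makes the chain of left ideals generated by the powers of $ab$ stabilize, and that alone never forces Drazin invertibility: the unilateral shift $u$ satisfies $u^{*}u^{m+1}=u^{m}$ for every $m$, yet $u$ is not Drazin invertible. Drazin invertibility of a single element $x$ requires $x^{n}\in x^{n+1}\mathcal{C}\cap\mathcal{C}x^{n+1}$, and you produce only one of the two memberships. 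The paper sidesteps this entirely by dualizing: $S\in\mbox{comm}_{r}(T)$ implies $S^{*}\in\mbox{comm}_{l}(T^{*})$ (the adjoint is an anti-homomorphism, which is exactly what turns right-type relations into left-type ones and reverses products), $T^{*}$ and $S^{*}$ are B-Fredholm, so Proposition~\ref{productDrazin}(ii) applied modulo finite-rank operators on $X^{*}$ shows that $S^{*}T^{*}=(TS)^{*}$ is B-Fredholm, and hence so is $TS$. If you want to keep a direct argument, you would need to also produce the right-handed identity $(ab)^{m+1}d=(ab)^{m}$, which the relations $ab\in\mbox{comm}(b)$ and $ba\in\mbox{comm}(a)$ do not obviously yield; the adjoint trick is the clean substitute for that missing half.
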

\begin{proof}  Assume that $S\in\mbox{comm}_{r}(T),$ then $S^{*}\in\mbox{comm}_{l}(T^{*}).$ As $T$ and $S$ are  B-Fredholm then  $T^{*}$ and $S^{*}$ are  B-Fredholm. From \cite[Theorem 3.4]{berkani}, $\pi_{f}(S^{*}):=S^{*}+\mathcal{F}(X)$ and $\pi_{f}(T^{*}):=T^{*}+\mathcal{F}(X)$ are Drazin invertible.  Since   $\pi_{f}(S^{*})\in\mbox{comm}_{l}(\pi_{f}(T^{*})),$  from Proposition \ref{productDrazin} we get  $\pi_{f}(S^{*}T^{*}):=S^{*}T^{*}+\mathcal{F}(X)$ is Drazin invertible.  We conclude again by  \cite[Theorem 3.4]{berkani}   that $TS$ is a B-Fredholm operator. The case $S\in\mbox{comm}_{l}(T)$ is analogous.
\end{proof}

As a continuation to what has been done in the paper \cite{aznay-ouahab-zariouh6}, we end this part by the following theorem which  improves \cite[Theorem 2.1]{aiena-garcia}.

\begin{thm} Let $T \in L(X).$ Then  $$\sigma_{d}(T)=\sigma_{bf}(T)\cup\mbox{acc}(\mbox{acc}\,\sigma(T)),$$  $$\sigma_{b}(T)=\sigma_{e}(T)\cup\mbox{acc}(\mbox{acc}\,\sigma(T)).$$ 
\end{thm}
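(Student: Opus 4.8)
The plan is to prove both identities by a single mechanism, handling the pairs $(\sigma_b,\sigma_e)$ and $(\sigma_d,\sigma_{bf})$ in parallel. First I would record two pole-type characterizations. For a fixed $\lambda$, write $S=T-\lambda I$. On the one hand, $S$ is Browder if and only if $S$ is Fredholm and $\lambda\notin\mbox{acc}\,\sigma(T)$: a Fredholm operator with an isolated (or absent) spectral point has finite ascent and descent, hence is Browder, and conversely. On the other hand, $S$ is Drazin invertible if and only if $S$ is B-Fredholm and $\lambda\notin\mbox{acc}\,\sigma(T)$: writing the B-Fredholm decomposition $(M,N)\in\mbox{Red}(S)$ with $S_{M}$ Fredholm and $S_{N}$ nilpotent, one has $\sigma(S)=\sigma(S_{M})\cup\{0\}$, so $\lambda\notin\mbox{acc}\,\sigma(T)$ forces $0$ to be isolated in (or absent from) $\sigma(S_{M})$; then $0$ is a pole of the Fredholm operator $S_{M}$, which is therefore Drazin invertible, and adjoining the nilpotent summand makes $S$ Drazin invertible. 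This last implication is exactly the one already invoked via \cite[Theorem 2.21]{aznay-ouahab-zariouh4} and \cite[Proposition 2.10]{aznay-ouahab-zariouh6} in the proof of Theorem \ref{propgenriesznilp}. Passing to complements, these equivalences yield the familiar forms
$$\sigma_{b}(T)=\sigma_{e}(T)\cup\mbox{acc}\,\sigma(T),\qquad \sigma_{d}(T)=\sigma_{bf}(T)\cup\mbox{acc}\,\sigma(T).$$

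Since $\mbox{acc}(\mbox{acc}\,\sigma(T))\subseteq\mbox{acc}\,\sigma(T)$ while $\sigma_{e}(T)\subseteq\sigma_{b}(T)$ and $\sigma_{bf}(T)\subseteq\sigma_{d}(T)$, the inclusions $\supseteq$ in the two asserted identities are immediate from the displayed forms. Hence everything reduces to the two inclusions
$$\mbox{acc}\,\sigma(T)\setminus\sigma_{e}(T)\subseteq\mbox{acc}(\mbox{acc}\,\sigma(T)),\qquad \mbox{acc}\,\sigma(T)\setminus\sigma_{bf}(T)\subseteq\mbox{acc}(\mbox{acc}\,\sigma(T)).$$

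The heart of the matter, and the step I expect to be the main obstacle, is the following dichotomy, which I would establish with the punctured neighbourhood theorem: if $T-\lambda_{0}I$ is Fredholm (resp. B-Fredholm), then either $\lambda_{0}$ is isolated in $\sigma(T)$, or there is a punctured disk $0<|\lambda-\lambda_{0}|<\varepsilon$ entirely contained in $\sigma(T)$. In the Fredholm case the classical theorem gives $\varepsilon>0$ on which $\alpha(T-\lambda I)$ and $\beta(T-\lambda I)$ are both constant for $0<|\lambda-\lambda_{0}|<\varepsilon$; in the B-Fredholm case Berkani's punctured neighbourhood theorem (see \cite{berkani}) first produces a punctured disk of Fredholm operators of constant index, to which the classical result then applies on a possibly smaller disk. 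If the common value there is $\alpha=\beta=0$, the punctured disk lies in the resolvent set and $\lambda_{0}$ is isolated in $\sigma(T)$; otherwise $T-\lambda I$ fails to be invertible throughout the punctured disk, so that disk is contained in $\sigma(T)$.

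With the dichotomy in hand both reductions close at once. Let $\lambda_{0}\in\mbox{acc}\,\sigma(T)$ with $T-\lambda_{0}I$ Fredholm (resp. B-Fredholm). Then $\lambda_{0}$ is not isolated in $\sigma(T)$, so by the dichotomy some punctured disk $D^{\ast}$ about $\lambda_{0}$ lies in $\sigma(T)$; since $\sigma(T)$ is closed and $\lambda_{0}\in\sigma(T)$, the full open disk $D^{\ast}\cup\{\lambda_{0}\}$ is contained in $\sigma(T)$, and every one of its points is a limit of other points of that disk, whence $D^{\ast}\cup\{\lambda_{0}\}\subseteq\mbox{acc}\,\sigma(T)$. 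In particular $\lambda_{0}$ is a limit of points of $\mbox{acc}\,\sigma(T)$, that is $\lambda_{0}\in\mbox{acc}(\mbox{acc}\,\sigma(T))$. This proves both displayed inclusions, and combined with the first paragraph it yields $\sigma_{d}(T)=\sigma_{bf}(T)\cup\mbox{acc}(\mbox{acc}\,\sigma(T))$ and $\sigma_{b}(T)=\sigma_{e}(T)\cup\mbox{acc}(\mbox{acc}\,\sigma(T))$.
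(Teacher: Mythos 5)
Your overall strategy is sound and genuinely different from the paper's: you reduce both identities to the classical formulas $\sigma_{b}(T)=\sigma_{e}(T)\cup\mbox{acc}\,\sigma(T)$ and $\sigma_{d}(T)=\sigma_{bf}(T)\cup\mbox{acc}\,\sigma(T)$ and then to a punctured-disk dichotomy, whereas the paper deduces everything from its own $g_{z}$-Kato machinery: for $\lambda=0\notin\sigma_{bf}(T)\cup\mbox{acc}(\mbox{acc}\,\sigma(T))$ it invokes its earlier results to conclude that $T$ is $g_{z}$-invertible and admits a decomposition $T=T_{M}\oplus T_{N}$ with $T_{M}$ semi-regular and $T_{N}$ nilpotent, and then that $p(T_{M})=q(T_{M})=0$, so $T_{M}$ is invertible and $T$ is Drazin invertible. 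Your route is more elementary and self-contained modulo classical Fredholm/B-Fredholm theory, and the Fredholm half of your dichotomy is correct: the classical punctured neighbourhood theorem does give constancy of $\alpha(T-\lambda I)$ and $\beta(T-\lambda I)$ on a punctured disk centred at a \emph{Fredholm} point.

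The B-Fredholm half, however, has a genuine gap as justified. From Berkani's theorem you extract only that $T-\lambda I$ is Fredholm with constant index on a punctured disk about $\lambda_{0}$, and you then assert that ``the classical result then applies on a possibly smaller disk.'' It cannot: the classical theorem needs a Fredholm centre, and $\lambda_{0}$ need not be a Fredholm point; applying it at the Fredholm points inside the punctured disk gives constancy of $\alpha$ and $\beta$ only off a discrete set of jump points, and nothing in your argument prevents those jumps from accumulating at $\lambda_{0}$. In fact ``Fredholm with constant index on a punctured disk'' is strictly too weak to imply your dichotomy: the operator $T=\mbox{diag}(1,1/2,1/3,\dots)$ on $\ell^{2}$ satisfies it at $\lambda_{0}=0$ (every $T-\lambda I$ with $\lambda\neq 0$ is Fredholm of index $0$), yet $0$ is neither isolated in $\sigma(T)$ nor is any punctured disk about $0$ contained in $\sigma(T)$. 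So the inference as written would prove a false statement for this operator; what saves your actual claim is that this $T$ is not B-Fredholm at $0$, which shows B-Fredholmness must be used more strongly than you use it. The repair is cheap and already in your toolbox: invoke the decomposition you used in your first paragraph, $T-\lambda_{0}I=S_{M}\oplus S_{N}$ with $S_{M}$ Fredholm and $S_{N}$ nilpotent; then for $0<|\lambda-\lambda_{0}|$ small one has $\lambda\in\sigma(T)$ if and only if $\lambda-\lambda_{0}\in\sigma(S_{M})$, and the classical punctured neighbourhood theorem applied to the Fredholm operator $S_{M}$ at its centre $0$ yields exactly the dichotomy. With that substitution your proof is complete.
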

\begin{proof} Let us prove that  $\sigma_{d}(T)=\sigma_{bf}(T)\cup\mbox{acc}(\mbox{acc}\,\sigma(T)).$  Let $\lambda \notin \sigma_{bf}(T)\cup\mbox{acc}(\mbox{acc}\,\sigma(T))$ and  without loss of generality we can assume that $\lambda =0.$ Then $T$ is B-Fredholm and $0\notin \mbox{acc}\,(\mbox{acc}\,\sigma(T)).$ This entails from \cite[Theorem 2.21]{aznay-ouahab-zariouh4} and  \cite[Theorem 4.11]{aznay-ouahab-zariouh6}   that $T$ is a $g_{z}$-invertible operator and $T=T_{M}\oplus T_{N}$ for some $(M,N)\in \mbox{Red}(T)$ such that $T_{M}$ is semi-regular and $T_{N}$ is nilpotent. This  implies  again by \cite[Theorem 4.7]{aznay-ouahab-zariouh6} that $p(T_{M})=q(T_{M})=\tilde{p}(T)=\tilde{q}(T)=0,$ and so  $T_{M}$ is  invertible. Hence $T$ is Drazin invertible. The converse is clear, since $\mbox{acc}\,\sigma(T)\subset \sigma_{d}(T).$  The second equality goes similarly. For the definition of $ \tilde{p}(T)$ and  $\tilde{q}(T)$ of a  $g_{z}$-invertible operator $T,$ see \cite{aznay-ouahab-zariouh6}.
\end{proof}

\goodbreak 
{\small \noindent Zakariae Aznay,\\  Laboratory (L.A.N.O), Department of Mathematics,\\Faculty of Science, Mohammed I University,\\  Oujda 60000 Morocco.\\
aznay.zakariae@ump.ac.ma\\

\noindent Abdelmalek Ouahab,\newline Laboratory (L.A.N.O), Department of
	Mathematics,\newline Faculty of Science, Mohammed I University,\\
	\noindent Oujda 60000 Morocco.\\
	\noindent ouahab05@yahoo.fr\\

 \noindent Hassan  Zariouh,\newline Department of
Mathematics (CRMEFO),\newline
 \noindent and laboratory (L.A.N.O), Faculty of Science,\newline
  Mohammed I University, Oujda 60000 Morocco.\\
 \noindent h.zariouh@yahoo.fr

\end{document}